\theoremstyle{definition}
\newtheorem{theorem}{Theorem}[section]
\newtheorem{proposition}[theorem]{Proposition}
\newtheorem{corollary}[theorem]{Corollary}
\newtheorem{definition}[theorem]{Definition}
\newtheorem{remark}[theorem]{Remark}
\newtheorem{example}[theorem]{Example}
\newtheorem{open problem}[theorem]{Open problem}
\newcommand{\R}{\mathbb R}
\newcommand{\Z}{\mathbb Z}
\renewcommand{\S}{\mathbb S}
\newcommand{\N}{\mathbb N}
\newcommand{\C}{\mathbb C}
\newcommand{\Dom}{\mathrm{Dom}}
\newcommand{\Spec}{\mathrm{spec}}
\newcommand{\calA}{{\mathcal A}}
\newcommand{\calB}{{\mathcal B}}
\newcommand{\calD}{{\mathcal D}}
\newcommand{\calH}{{\mathcal H}}
\newcommand{\calN}{{\mathcal N}}
\newcommand{\calE}{{\mathcal E}}
\newcounter{para}
\title{On some topics of analysis on noncommutative spaces}
\author{Danila Zaev}
\address{Faculty of Mathematics, Higher School of Economics, Moscow}
\thanks{Supported in part by the Simons Foundation}
\date{\today}
\email{zaev.da@gmail.com}
\begin{document}
\maketitle
\begin{abstract}
	We consider a conservative Markov semigroup on a semi-finite $W^*$-algebra. It is known that under some reasonable assumptions it is enough to determine a kind of differential structure on such a ``noncommutative space''. We construct an analogue of a Riemannian metric in this setting, formulate a Poincar\'e-type inequality, provide existence and uniqueness results for quasi-linear elliptic and parabolic PDEs defined in terms of the constructed noncommutative calculus.
\end{abstract}

\tableofcontents

\section{Introduction}
The main goal of this paper is to establish a framework for the analysis of PDEs in a noncommutative setting. As was shown by J.-L. Sauvageout and F. Cipriani in \cite{CS03}, one can start with a Markov semigroup on a $C^*$-algebra and construct in a natural way a noncommutative version of differential calculus, including square-integrable vector fields, gradient and divergence operators. In commutative case this construction corresponds to some kind of (possibly) non-local analysis, described e.g. in \cite{Hinz13}. In finite-dimensional noncommutative situation it was considered by M.A. Rieffel in \cite{Rieffel}.

``What is a continuous function? It is an element of a commutative $C^*$-algebra''. This is the expression of the idea of the noncommutative analysis approach. Instead of characterizing an object itself, one provides a definition for the algebraic structure of the space of these objects. It is well-known that $C^*$-algebras encode continuous functions (and their noncommutative operator-like analogues) and therefore, by duality, (nice enough) topological spaces;
$W^*$- (or von Neumann) algebras encode bounded measurable functions/operators. It is also possible to define unbounded measurable objects (operators or functions) in the similar manner, spaces of $L^p$- ($p$-absolutely integrable) objects and so on.

The starting point of this paper is the following fascinating fact: it is possible to define abstractly a kind of Sobolev $W^{1,2}$-space (space of square integrable functions with square integrable weak differential), which is called a Dirichlet space. In the following we show that a commutative $C^*$- (or $W^*$-) algebra in a pair with an appropriately defined Dirichlet space acts like a generalized ``Riemannian manifold'', which has a Sobolev (instead of a differentiable one) tangent bundle and a measurable measure-valued ``metric tensor'' defined on it. Moreover, being defined in the ``functional analytic'' way, it can be easily generalized to the noncommutative setting, leading to a beautiful unification of different mathematical concepts.

Our novelty here is a construction of an analogue of a Riemannian metric in the setting of $C^*$-Dirichlet forms. In fact, this concept fills the gap in the informal table of (determining one another) structures below:
\begin{center}
	\begin{tabular}{ | l || l | l |}
		\hline
		 & \textit{is scalar-valued} & \textit{is measure-valued}  \\ \hline
		\textit{defined on functions} & Dirichlet form & Carr\'{e} du Champ form \\ \hline 
		\textit{defined on sections} & Symmetric differential calculus & place for a ``Riemannian metric'' \\ \hline
	\end{tabular}
\end{center}

Then we formulate a Poincar\'e-type inequality in the form appropriate to our goals. With its help we provide existence and uniqueness results for quasi-linear elliptic and some parabolic PDEs defined in terms of the constructed noncommutative calculus. In particular, we formulate a definition of continuity equation on a noncommutative space and deduce an existence result from the general theory of linear evolution PDEs. 

The paper is organized as follows. In Section 2 we review the current state of the area of our interest. Then, in preliminary Section 3 we describe some basic facts about $C^*$- and $W^*$-algebras, weights and traces on them, and the construction of corresponding $L^2$-spaces. We restrict ourselves by considering only tracial case, though not necessarily finite. Note, that necessity for existence of a faithful normal semi-finite trace, that will be assumed throughout the paper, restricts us to the world of semi-finite $W^*$-algebras. In Section 4 we introduce the notions of Markov map, $C_0$-semigroup of Markov operators, its generator, and the key notion of Dirichlet form. The material is not new and rather standard, but our goal is to describe it for a reader not familiar with noncommutative geometry nor operator algebras. Sections 5 and 6 are dedicated to the construction of noncommutative differential calculus associated with a Dirichlet form and, hence, with a Markov semigroup. 

In Section 7 we describe our construction of noncommutative ``Riemannian metric''. In general it takes values in the Banach dual of the underlying $C^*$-algebra. In the case of smooth Riemannian manifold it reduces to a measurable $L^1$-valued bilinear form of $L^2$-sections of the tangent bundle and extends the Riemannian metric tensor defined in the usual sense.

In Section 8 we provide a definition of Poincar\'e-like inequality, which differs from the usual one, but is appropriate for our following study of elliptic PDEs. We consider an example of noncommutative 2-torus equipped with the canonical Dirichlet form and come to a conclusion that it satisfies our version of the inequality.

In Section 9 we define and study linear and quasilinear elliptic PDEs. We mostly follow a standard approach involving Galerkin approximation and Browder-Minty monotonicity trick. Under the standard assumptions this technique allows us to prove existence and uniqueness results of weak solutions.

Finally, in Section 10 we add time axis in our setting and consider parabolic PDEs. In particular, we show that it is possible to give a meaningful formulation of a continuity equation on noncommutative space and provide an existence result for its weak solution.

\section{An introduction to noncommutative analysis}

There are basically two general approaches to noncommutative analysis. The first one generalizes abstract measure theory, and its starting point  is a noncommutative analogue of $L^{\infty}$-space, that is a $W^*$-algebra (it is also called von Neumann algebra, but usually a von Neumann algebra means a $W^*$-algebra with a fixed Hilbert space representation% in $\calB(H)$
). Following this approach one defines an $L^1$-space as a Banach predual of a given $W^*$-algebra and construct other $L^p$-spaces using a kind of Banach space interpolation theory. A good reference for that is Section 1 of \cite{Xu07}.

The second approach generalizes topology, and its starting point is a $C^*$-algebra, which is in the commutative case appears to be isomorphic (via a kind of Gelfand duality) to the algebra of all continuous functions vanishing at infinity on some locally compact space. As in the commutative case, given a $C^*$-algebra and an appropriate analogue of a ``reference measure", one can construct all the $L^p$-spaces, $p\in [1,+\infty]$. They share many properties with their classic counterparts, e.g. H\"{o}lder and Minkowski inequalities. This construction is described, for example, in Section 2 of \cite{Alb77}. 

One can generalize the notion of finite measure considering a positive linear functional on a $C^*$-algebra (in a commutative case it corresponds to integration). If this functional has a unit norm, it is called a \textit{state}. Thus, states are elements of the positive part of a unit sphere in the Banach dual of a $C^*$-algebra. On $W^*$-algebras, which are analogues of $L^\infty$-spaces, one is usually interested in \textit{normal} states, which are elements of the space $L^1$ (which is the Banach predual of $L^\infty$). Equivalently, normal states are weak$^*$-continuous states.

%It is clear, that (noncommutative) Dirichlet forms can be defined using (noncommutative) $L^2$-space without any reference to a $C^*$-algebra, so both approaches are appropriate for this goal.
%However, in many examples of Dirichlet spaces there is a natural (meaningful) $C^*$-algebra, which is well-suited with the Dirichlet form (its intersection with the domain of the form constitutes a form core), and, hence, it can serve as a powerful technical tool. Moreover, we need to consider a $C^*$-algebra, if we want to formulate an analogue of the Feller property.

The generalization of the notion of non-necessarily finite measure is called a \textit{weight}. It is defined as an unbounded positive linear functional, which is lower semi-continuous and has a dense domain of definition. For the rigorous definition see, for example, Subsection 2.2. of \cite{Kos13} or Section 4 of this paper. A \textit{faithful} weight (state) is such a functional, that the only positive element in its kernel is zero. In the case of commutative $C^*$-algebras, faithful states correspond to Borel probability measures that have full domain. It is natural to choose a ``reference'' weight (state) to be faithful.

The basic dictionary between commutative and noncommutative notions appears to be as follows:
\begin{center}
	\begin{tabular}{ | l | l |}
		\hline
		Commutative & Noncommutative \\ \hline \hline
		Algebra of continuous functions & $C^*$-algebra \\ \hline
		Algebra $L^\infty$& $W^*$-algebra \\ \hline
		Borel measure & Weight on $C^*$-algebra  \\ \hline
		%Integration functional w.r.t. semi-finite measure & Semi-finite weight  \\ \hline
		Probability measure & State \\ \hline
	\end{tabular}
\end{center}
and, as we see later, it can be continued in various directions.

An important property of a weight is the \textit{trace} property: $\tau(a\cdot b)=\tau (b \cdot a)$, $\forall a, b \in \calA_+$  (actually, trace of a matrix is a functional of this type on a matrix algebra). If our reference weight has this property, the consequent ``analysis'' is significantly simplified: we are still not too far from commutative world. In the constructions of $L^p$-spaces, described in \cite{Alb77} and \cite{Xu07}, it is assumed that the reference weight has the trace property. For the construction of $L^p$-spaces in the case without this assumption, see Subsection 5 of \cite{Kos13}.

Dirichlet forms in noncommutative setting were originally defined by Albeverio and Hoegh-Krohn in \cite{Alb77}. They started with a $C^*$-algebra and a faithful trace defined on it. Then they consider a quadratic form on a dense domain of the associated $L^2$-space (more precisely, the domain is dense in the ``real" part of the $L^2$-space). If a quadratic form is positive, closed, densely-defined, and has a quadratic Lipschitz contraction property: $\calE(f(a))\leq \|f\|_{Lip_0}^2\cdot \calE(a)$, $\forall f\in Lip_0(\R),\ a\in D(\calE)$, it is called a \textit{Dirichlet form}. If this form is agree with the original $C^*$-algebra in the sense that the intersection of the algebra with the domain of a form appears to be a form core dense in the algebra, it is called \textit{regular}. Dirichlet forms are in one-to-one correspondence with symmetric completely Markov $C_0$-semigroups on $L^2$ (Th. 2.7, 2.8 of \cite{Alb77}).
In addition, the notion of \textit{completely} Dirichlet form and the corresponding notion of \textit{completely} Markov semigroup were introduced in Section 3 of \cite{Alb77}. These notions make sense only in noncommutative context. Theorem 3.2 of \cite{Alb77} establishes a characterization of completely Dirichlet forms, which may serve as a technical tool for the study of forms with this property. 

The research of this topic was continued by B. Davies and M. Lindsay, see \cite{DL92}. In particular, they proved an important technical result: the intersection of the domain of a completely Dirichlet form with the original $C^*$-algebra is an involutive dense subalgebra.

Based on the definitions introduced in \cite{Alb77}, Sauvageout and Cipriani in the series of papers (\cite{Cip98}, \cite{Cip08}, \cite{Cip14}, \cite{CS03}, \cite{CS03-2}, \cite{CS07}, \cite{CS12}, e.t.c.) made a huge breakthrough in the theory and established a bunch of new results. A short overview of their achievements is available in the form of slides (see \cite{Cip14} and \cite{Sav07}). 

Let us mention a few directions in the contemporary research.
\begin{itemize}
	\item Construction of a noncommutative calculus associated to a Dirichlet form. The main paper devoted to this topic is \cite{CS03}. The main result there is an explicit construction of a ``tangent" Hilbert bimodule, which is an analogue of the module of all $L^2$-integrable sections of the tangent bundle of a manifold. It was shown, that there exists a linear map from the domain of the form into this bimodule that satisfies Leibniz property and appears to be an analogue of a ``gradient'' map. Such objects, like Dirichlet forms themselves, their associated Carr\'e du Champ operators $\Gamma$, and infinitesimal generators of the corresponding Markov semigroups can be expressed via the ``gradient'' derivation into the ``tangent'' bimodule.
	
	\item Noncommutative potential theory. It includes the notions of Carr\'e du Champ operators, finite-energy functionals, potentials, and multipliers of Dirichlet spaces appropriate to the noncommutative context. Under the assumption of trace property of the ``reference'' weight this theory was described in \cite{CS12}. An approach without this assumption was also developed: see \cite{CFK12} for the original description of the results and Sections 2 and 3 in \cite{Cip14} for a comprehensive review of the topic.
	
	\item Application to self-similar structures. Using the results about the existence of a ``tangent'' Hilbert bimodule associated to a Dirichlet form, it is possible to construct a potential theory of fractal spaces. The one paper devoted to this direction of research is \cite{Cip08}. See also the works of M. Hinz and A. Teplyaev (e.g. \cite{Hinz12}, \cite{Hinz13-2}).
\end{itemize}

\section{Preliminaries: $C^*$- and $W^*$-algebras, $L^2$-spaces}

\begin{definition}[$C^*$-algebra]
	\begin{enumerate}
		\item An algebra $\calA$ is called \emph{Banach algebra} over field $\mathcal{F}$ iff it is a Banach space over $\mathcal{F}$ such that $\|a\cdot b\|\leq \|a\|\cdot\|b\|$.
		\item An associative Banach algebra $\calA$ over $\C$ is \emph{involutive} Banach algebra iff it is equipped with operation $*: \calA\to \calA$, such that for all $a,b\in \calA$, $t\in \C$
		\begin{enumerate}
			\item $(a^*)^*=a$,
			\item $(ab)^*=a^*b^*$,
			\item $(ta+b)^*=\overline{t}a+b$,
			\item $\|a\|=\|a^*\|$.
		\end{enumerate}
		\item An involutive Banach algebra $\calA$ is a \emph{$C^*$-algebra} iff $\|a a^*\|=\|a\|^2$ for any $a\in \calA$.
	\end{enumerate}
\end{definition}
If $\calA$ is a unital algebra, the last property is equivalent to the following:
$$
\|a\|=\sup\{|\alpha|: \alpha\in \Spec(a)\},
$$
where $\Spec(a)$ is the \emph{spectrum} of $a$: 
$$
\Spec(a):=\{z\in \C: \nexists b\in \calA\ \text{s.t.}\ (a-z\cdot 1_\calA)b=b(a-z\cdot 1_\calA)=1_\calA\}.
$$
The spectrum of any element is a compact nonempty subset of the complex plane.

An element $a\in \calA$ is called \emph{positive} iff $\exists b\in \calA$ s.t. $a=b^* b$. Equivalently, it is an element with a nonnegative real spectrum. 
We denote the set of all positive elements in $\calA$ as $\calA^+$. 
Any $C^*$-algebra is naturally equipped with the following order: $a\geq b \iff a-b\ \text{is positive}$.

An $a\in \calA$ is \emph{self-adjoint} iff $a=a^*$, or, equivalently, $a$ has a real spectrum. The set of all self-adjoint elements is denoted by $\calA^{s.a.}$. In general case $\calA^{s.a.}$ is not closed under the inherited algebraic operation, but appears to be a Banach space over $\R$. Any positive element is automatically self-adjoint: $a=b^*b=(b^*b)^*=a^*$.

For any positive element $a\in \calA^+$ define its square root $a^\frac{1}{2}$ as a positive element $a^\frac{1}{2}\in \calA^+$, such that $(a^\frac{1}{2})^2=a$. The \emph{modulus} of an element $a\in \calA$ is defined as $|a|:=(a^*a)^\frac{1}{2}\in \calA^+$. It is clear, that $\|a\|=\||a|\|$. For a self-adjoint element $a\in \calA^{s.a.}$, the element $|a|-a$ is in $\calA^+$. This fact allows the following decomposition
\begin{equation}
a=a^+-a^-,\ \forall a\in \calA^{s.a.},\ a^+:=|a|,\ a^-:=|a|-a,\ a^+,a^-\in \calA^+.
\end{equation}

\begin{definition}[$W^*$-algebra]
	A $C^*$-algebra $\calN$ is a $W^*$-algebra iff it has a Banach predual space $\calN_*$.
\end{definition}
It is known that any $W^*$-algebra has a unique Banach predual. Thus, along with the $C^*$-norm topology any $W^*$-algebra carry the natural weak$^*$-topology. Here is the essential difference between topology and measure theory: the first is the world of the norm (``supremum'') topology, and the second is the world of the weak$^*$-one. 

There is a set of representation results for $C^*$- and $W^*$-algebras motivating their definitions. Sometimes they are called Gelfand-Naimark type theorems.
\begin{theorem}[Gelfand-Naimark]
	\label{Gelfand-Naimark}
	In the following assertions ``$\simeq$'' is a $^*$-isomorphism.
	\begin{itemize}
		\item
		For every commutative unital $C^*$-algebra $\calA$ there exists a compact Hausdorff space $X$ s.t.
		$$
		\calA\simeq C(X,\C).
		$$
		\item
		For every commutative $C^*$-algebra $\calA$ there exists a locally-compact Hausdorff space $X$ s.t.
		$$
		\calA\simeq C_0(X,\C),
		$$
		where $C_0(X,\C)$ is the uniform completion of the space $C_c(X,\C)$ of all compactly-supported continuous functions.
		\item
		For every $C^*$-algebra ($W^*$-algebra) $\calA$ there exists a complex Hilbert space $H$ and an uniformly (ultraweakly) closed
		$^*$-subalgebra $\pi(\calA)$ of $\calB(H)$ such that
		$$
		\calA \simeq \pi(\calA) \subseteq \calB(H),
		$$
		where $\calB(H)$ is a $W^*$-algebra of all bounded linear operators on $H$.
		\item
		For every commutative $W^*$-algebra $\calA$ there exists a measure space $(X,\calB, \mu)$ s.t.
		$$
		\calA \simeq L^\infty_\C(X,\mu).
		$$
	\end{itemize}
\end{theorem}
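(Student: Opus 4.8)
The plan is to treat the four assertions in order of logical dependence, with the commutative unital case as the engine driving the second and fourth, and the representation theorem handled separately.

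\textbf{Commutative unital case.} First I would introduce the character space $X:=\widehat{\calA}$, the set of nonzero multiplicative linear functionals $\phi:\calA\to\C$, topologized by the weak$^*$ topology inherited from $\calA^*$. Because $\calA$ is unital, every character satisfies $\phi(1_\calA)=1$ and $\|\phi\|=1$, so $X$ sits inside the unit ball of $\calA^*$; one checks it is weak$^*$-closed, hence compact and Hausdorff by Banach--Alaoglu. The main object is the Gelfand transform $\Gamma:\calA\to C(X,\C)$, $\Gamma(a)(\phi):=\phi(a)$. The steps are: (i) $\Gamma$ is an algebra homomorphism, immediate from multiplicativity of characters; (ii) $\Gamma$ is a $^*$-homomorphism, which reduces to $\phi(a^*)=\overline{\phi(a)}$ and follows because $\phi(a)\in\Spec(a)$ forces self-adjoint elements to take real values; (iii) $\Gamma$ is isometric, the crux being $\|\Gamma(a)\|_\infty=\sup_\phi|\phi(a)|=r(a)=\|a\|$, where the spectral radius $r(a)$ equals $\|a\|$ for normal elements via the $C^*$-identity and the spectral radius formula, and every element is normal here; (iv) $\Gamma$ is onto, since its image is a closed (isometry plus completeness) $^*$-subalgebra of $C(X,\C)$ that separates points by construction and contains the constants, hence equals $C(X,\C)$ by Stone--Weierstrass.

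\textbf{Commutative non-unital and commutative $W^*$ cases.} For the second assertion I would adjoin a unit, forming $\widetilde{\calA}=\calA\oplus\C$ with its canonical $C^*$-norm, apply the unital case to obtain $\widetilde{\calA}\simeq C(\widetilde X,\C)$ with $\widetilde X$ compact, and identify $\calA$ with the ideal of functions vanishing at the distinguished character that annihilates $\calA$; deleting that point yields a locally compact Hausdorff $X$ with $\calA\simeq C_0(X,\C)$. For the fourth assertion, a commutative $W^*$-algebra is in particular a commutative unital $C^*$-algebra, so it is already $C(X,\C)$; the extra work is to produce the measure. Here I would exploit the predual: a faithful normal state (or the order structure of $\calN_*$) supplies a Radon measure $\mu$ on $X$, and weak$^*$-density upgrades the isomorphism from $C(X,\C)$ to $L^\infty_\C(X,\mu)$, using that bounded Borel functions are approximated in the appropriate topology.

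\textbf{The representation theorem via GNS.} For the third assertion I would run the Gelfand--Naimark--Segal construction: to each state $\omega$ associate the quotient $\calA/N_\omega$, where $N_\omega=\{a\in\calA:\omega(a^*a)=0\}$ is the Gelfand ideal of the sesquilinear form $\langle a,b\rangle_\omega=\omega(a^*b)$, complete it to a Hilbert space $H_\omega$, and obtain a cyclic $^*$-representation $\pi_\omega$ by left multiplication. Taking the direct sum $\pi=\bigoplus_\omega\pi_\omega$ over all states on $H=\bigoplus_\omega H_\omega$ gives a representation that is faithful because states separate points (for $a\neq 0$ some state has $\omega(a^*a)>0$); an injective $^*$-homomorphism of $C^*$-algebras is automatically isometric, so $\pi(\calA)$ is complete, hence uniformly closed in $\calB(H)$. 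For the $W^*$ refinement I would instead restrict to normal states, check that the resulting representation is normal, and verify that $\pi(\calN)$ is ultraweakly closed, invoking that a normal representation of a $W^*$-algebra has weak$^*$-closed image.

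\textbf{Main obstacle.} The genuinely substantial points are the isometry in step (iii)---everything rests on $r(a)=\|a\|$, which is exactly where the $C^*$-identity is indispensable---and the $W^*$ upgrades in the second and fourth bullets, where passing from the uniform/norm picture to the ultraweak/$L^\infty$ picture forces one to use the predual and normality in an essential way rather than as a formality.
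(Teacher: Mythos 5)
The paper does not prove this theorem at all: it is quoted as classical background (``Gelfand--Naimark type theorems'') with no argument supplied, so there is nothing to compare your proof against line by line. Your sketch is the standard textbook route and is correct in outline: Gelfand transform plus Banach--Alaoglu, spectral radius, and Stone--Weierstrass for the unital commutative case; unitization and deletion of the character at infinity for $C_0(X,\C)$; the universal GNS representation, with faithfulness from the separation of points by states and automatic isometry of injective $^*$-homomorphisms, for the third bullet; and normal states plus weak$^*$-closedness of the image of a normal representation for the $W^*$ refinement. The one place where your argument as written does not cover the full generality of the statement is the fourth bullet: a faithful normal \emph{state} exists only when the commutative $W^*$-algebra is $\sigma$-finite (e.g.\ $\ell^\infty(I)$ for uncountable $I$ has none), so to get $\calA\simeq L^\infty_\C(X,\mu)$ for \emph{every} commutative $W^*$-algebra you must first decompose the identity into a maximal orthogonal family of $\sigma$-finite projections, apply your construction on each corner, and take the disjoint union of the resulting measure spaces (yielding a localizable, not necessarily $\sigma$-finite, $(X,\calB,\mu)$). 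Your parenthetical appeal to ``the order structure of $\calN_*$'' gestures at this but should be made explicit; with that reduction added, the proof is complete.
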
 

Let $\calA$ be a commutative $C^*$-algebra.
Instead of measures one can equivalently work with ``integrations''. It is a well-known Daniels approach to measure theory (see Section 5.6 of \cite{Kos13}). It is justified to adopt and generalize this approach to noncommutative algebras. It can be easily done for finite measures.
\begin{definition}[State and trace]
	A linear functional $\mu:\calA \rightarrow \C$ is called \emph{state} on $\calA$ iff
	\begin{itemize}
		\item $\mu$ is continuous w.r.t. norm topology on $\calA$,
		\item $\mu$ is positive: $\mu(a)\in [0,+\infty)$ for any $a\in \calA^+$,
		\item $\|\mu\|=1$, where $\|\mu\|:=\sup_{a\in \calA} \frac{|\mu(a)|}{\|a\|}$.
	\end{itemize}
	A state is a \emph{finite trace} iff it has the trace property:
	\begin{itemize}
		\item $\mu(a\cdot b)=\mu(b \cdot a),\ \forall a,b\in \calA$.
	\end{itemize}
\end{definition}
In the case of commutative $C^*$-algebra, there is a bijection between states, finite traces, and Borel probability measures on the Gelfand spectrum of the algebra (which is a compact Hausdorff space). 

In the case of not necessarily finite measure the situation is a bit more complicated. 
\begin{definition}[Weight and trace]
	A map $\mu:\calA^+ \rightarrow \R_\geq\cup {\infty}$ is called \emph{semi-finite weight} on $\calA$ iff
	\begin{itemize}
		\item $\mu(ta+b)=t\mu(a)+\mu(b)$ for $a,b\in A^+$, $t\in \R_\geq$,
		\item $\mu$ is lower-semicontinuous w.r.t. \emph{norm} topology on $\calA^+$,
	\end{itemize}
	A weight is called \emph{semi-finite} iff 
	\begin{itemize}
		\item $\mu$ is finite on a norm-dense subspace: $\Dom(\mu):=\{a: \mu(a)<\infty,\ a\in \calA^+\}$ is dense in $\calA$.
	\end{itemize}
	A weight is a \emph{trace} iff it satisfies the trace property:
	\begin{itemize}
		\item $\mu(a\cdot b)=\mu(b \cdot a),\ \forall a,b\in \calA$.
	\end{itemize}
\end{definition}
If a semi-finite weight is defined on a $W^*$-algebra, it is natural to modify appropriately the definition of semi-finiteness.
\begin{definition}[Normal weight and trace]
	A weight $\mu$ defined on a $W^*$-algebra $\calN$ is called \emph{normal} iff 
	it is lower-semicontinuous w.r.t. weak$^*$-topology on $\calN$. A weight $\mu$ is \emph{normally semi-finite} iff it is \emph{normal} and satisfies the property:
	\begin{itemize}
		\item $\mu$ is finite on a weak$^*$-dense subspace: $\Dom(\mu):=\{a: \mu(a)<\infty,\ a\in \calN^+\}$ is weak$^*$-dense in $\calN$.
	\end{itemize}	
\end{definition}
\begin{example}
	Let $\mu$ on $\Z$ (countable set with discrete topology) be the counting measure, $\calA:=C_0(\Z)$, $\calN:=l_\infty(\Z)=C_b(\Z)$. It can be shown that integration with $\mu$ defines a weight, which is semi-finite on $\calA$, but not semi-finite on $\calN$. Nevertheless, it is normally semi-finite on $\calN$.
\end{example}
Any state $\mu$ can be considered as a semi-finite weight with $\Dom(\mu)=\calA$. 
Since any element of $C^*$-algebra can be represented as a linear combination of two self-adjoint elements: $a=\frac{1}{2}(a^*+a)+\frac{i}{2}(ia^*-ia)$, it follows that for any weight $\mu$,
$$
\mu(a^*)=\overline{\mu(a)},\ a\in \Dom(\mu).
$$
A weight is called \emph{faithful} iff it has the following property:
$$
a\in \calA^+,\ \mu(a)=0 \implies a=0.
$$
In the commutative case any Borel measure corresponds to a faithful weight iff it has a full topological support.

Let us define a $^*$-subalgebra $\mathfrak{m}:=\{a\in \calN: \tau(|a|)<\infty\}$ and a *-ideal $\mathfrak{n}:=\{a\in \calN: \tau(a^*a)<\infty\}$ of $\calN$. By definition, $L^2(\tau)$ is a completion of $\mathfrak{n}$ in the inner product topology: $\langle a, b \rangle_{L^2} := \tau(a^*b)$ for $a,b\in \mathfrak{n}^{(2)}$. The space $L^1(\tau)$ is a completion of $\mathfrak{m}:=\{a\in \calN: \tau(|a|)<\infty\}$ in the norm topology defined as $\|a\|_{L^1}:=\tau(|a|)$. It is clear that $\mathfrak{m}=\calN \cap L^2(\tau)$ is dense in both spaces as well as $\mathfrak{n}=\calN \cap L^1(\tau)$. Let us define positive cones $(L^1(\tau))^+\subset L^1(\tau)$ and $(L^2(\tau))^+\subset L^2(\tau)$ as closures of $\mathfrak{m}^{+}=\{a\in \mathfrak{m}: a\in \calN^+\}$ and $\mathfrak{n}^{+}=\{a\in \mathfrak{n}: a\in \calN^+\}$ respectively. It can be checked that both $L^1(\tau)$ and $L^2(\tau)$ are bimodules over $\calN$ if one defines $a\cdot f$ for $a\in \calN$, $f\in L^1(\tau)$ as a limit of $a \cdot f_n$, $f_n\in \mathfrak{n}$, $f_n\to f$; $f\cdot a$ as a limit of $f_n \cdot a$, $f_n\in \mathfrak{n}$, $f_n\to f$; and analogously for $h\in L^2(\tau)$.

Noncommutative version of the Radon-Nikodym theorem (see section 5.1 of \cite{Kos13}) establishes isomorphism between $L^1(\tau)$ and $\calN_\ast$ via the correspondence: $a(f):=\tau(a\cdot f)=\tau(f\cdot a)$, $a\in \calN$, $f\in L^1(\tau)$. The GNS construction (see section 2.3 of \cite{Kos13}) provides a canonical $^*$-representation of $\calN$ on $L^2(\tau)$ such that $a(h):=a \cdot h$.

\section{Preliminaries: Semigroups and Markovianity}

Let $M_n$ be the algebra of all $n\times n$ matrices, $\mathrm{tr}: M_n\to \C$ be a usual matrix trace, $\calN$ be some semi-finite $W^*$-algebra.

\begin{definition}[Completely positive map]
	A linear map $T: \calN \to \calN$ is \textit{positive} iff $a\in \calN^+$ implies $T(a) \in \calN^+$. A linear map $T: \calN \to \calN$ is called \textit{completely positive} iff for any $n\in \N$ the map
	$Id \otimes T: M_n\otimes \calN \to M_n\otimes \calN$ is positive.
\end{definition}
Any completely positive map is obviously positive. If $\calN$ is commutative, then any positive map is also completely positive. Thus the notions of positivity and complete positivity are coincide in the commutative case. For more information on complete positivity see \cite{Helem}.

\begin{definition}[Markov map]
A linear map $P: \calN \to \calN$ is called Markov iff
\begin{enumerate}
	\item $P$ is completely positive,
	\item $P$ is a contraction: $\|P\|\leq 1$,
	\item $P$ is identity preserving: $P(1_{\calN})=1$.
	Here $1_{\calN}\in \calN$ is an identity element of the algebra.
\end{enumerate}
\end{definition}
It is clear that $\|P\|=1$, and it is also known that $0\leq a\leq 1$ implies $0\leq P(a) \leq 1$ for any Markov map $P$ on $\calN$.

\begin{remark}
	We call an operator $T$ on $\calN$ conservative iff $T(1_{\calN})=1$. Markov maps are always conservative as follows from our definition.
\end{remark}	

Let $\mathrm{Mark}(\calN)$ be the space of all Markov maps on $\calN$.
\begin{definition}[Markov semigroup]
	A family of Markov maps $(P_t)$, $P_t \in \mathrm{Mark}(\calN)$, $t\in [0,\infty)$, is a Markov semigroup iff
\begin{enumerate}
	\item $t \to P_t$ is a morphism of semigroups: $P_{t+s}=P_t+P_s$, $P_0=1$,
	\item it is element-wise weakly-$^*$ continuous: for any fixed elements $a\in \calN$, $f\in \calN_\ast$, $P_t(a)(f)\to a(f)$ as $t\to 0$.
\end{enumerate}
\end{definition}

\begin{definition}[Symmetric trace]
	A normal faithful semi-finite (n.f.s.) trace $\tau$ on $\calN$ is called symmetric w.r.t. Markov semigroup $(P_t)$ iff 
	$$
	\tau(a P_t(b))=\tau(P_t(a) b)
	$$
	for any $a,b\in \calN^+$ and any $t\in[0,\infty)$.
\end{definition}

Let $(P_t)$ be a Markov semigroup on $\calN$, $\tau$ be a symmetric n.f.s. trace. Define a semigroup $(P^{(2)}_t)\subset B(L^2(\tau))$ as $P^{(2)}_t(f)=P_t(f)$ on $\mathfrak{n}$ and extend each map by continuity. The resulting semigroup consists of positivity preserving self-adjoint operators of norm one. The semigroup is in fact strongly continuous w.r.t. $\|\cdot\|_{L^2}$-norm (e.g. Proposition 2.2 of \cite{DL92}), i.e. for any fixed $h\in L^2(\tau)$ $\|P_t^{(2)}(h)-h\|_{L^2}\to 0$ as $t\to 0$.

One can analogously define a semigroup $(P^{(1)}_t)$ on $L^1(\tau)$. Both $(P^{(2)}_t)$ and $(P^{(1)}_t)$ are called Markov semigroups.

\begin{definition}[Generator of Markov semigroup]
	Let $(P_t)$, $(P^{(2)})_t$, $(P^{(1)})_t$ be Markov semigroups on $\calN$, $L^2(\tau)$, $L^1(\tau)$ as defined above. Then their respective generators are uniquely defined by the equalities:
	\begin{eqnarray}
	\label{generator of weak semigroup}
	\lim_{t\searrow 0} \frac{1}{t} (P_t(a)(f)-a(f))-\Delta(a)(f)=0,\; \forall a\in D(\Delta),\ f\in \calN_\ast,\\
	\label{generator of L1 semigroup}
	\lim_{t\searrow 0} \frac{1}{t} \left\|(P^{(1)}_t(a)-a)-\Delta_{(1)}(a)\right\|_{L^1(\tau)}=0,\; \forall a\in D(\Delta_{(1)}),\\
	\label{generator of L2 semigroup}
	\lim_{t\searrow 0} \frac{1}{t} \left\|(P^{(2)}_t(a)-a)-\Delta_{(2)}(a)\right\|_{L^2(\tau)}=0,\; \forall a\in D(\Delta_{(2)}),
	\end{eqnarray}
	where $D(\Delta)\subseteq \calN$, $D(\Delta_{(1)})\subseteq L^1(\tau)$, $D(\Delta_{(2)})\subseteq L^2(\tau)$ are the subspaces of all elements s.t. the corresponding limits in (\ref{generator of weak semigroup}), (\ref{generator of L1 semigroup}), (\ref{generator of L2 semigroup}) exist.
\end{definition}
\begin{proposition}
	$\Delta_{(1)}=\Delta$ when restricted to $D(\Delta)\cap D(\Delta_{(1)})$ and $\Delta_{(2)}=\Delta$ when they are restricted to $D(\Delta)\cap D(\Delta_{(2)})$.
\end{proposition}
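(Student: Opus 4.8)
The plan is to read off both generators from the single family $(P_t)$ restricted to the appropriate core, and to exploit the trace property to identify the two difference quotients. First note that the hypotheses force the intersection of domains into one of the ideals introduced above: since $D(\Delta)\subseteq\calN$ while $D(\Delta_{(1)})\subseteq L^1(\tau)$, an element $a\in D(\Delta)\cap D(\Delta_{(1)})$ lies in $\calN\cap L^1(\tau)=\mathfrak{m}$, and likewise $D(\Delta)\cap D(\Delta_{(2)})\subseteq\calN\cap L^2(\tau)=\mathfrak{n}$. On these cores the semigroups coincide by construction, $P^{(1)}_t=P_t$ on $\mathfrak{m}$ and $P^{(2)}_t=P_t$ on $\mathfrak{n}$. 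Conceptually, the symmetry of $\tau$ makes $(P_t)$ the Banach adjoint of $(P^{(1)}_t)$ under the duality $L^1(\tau)=\calN_\ast$ and makes $(P^{(2)}_t)$ self-adjoint, so one expects $\Delta$ to be a consistent restriction of $(\Delta_{(1)})^\ast$ and of $\Delta_{(2)}$; the difference-quotient computation below makes this pointwise on the overlap.

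I would carry out the $L^1$ case concretely. Fix $a\in D(\Delta)\cap D(\Delta_{(1)})\subseteq\mathfrak{m}$ and a test element $g\in\mathfrak{m}$. Because $a\in D(\Delta_{(1)})$, the quotient $t^{-1}(P^{(1)}_t(a)-a)$ converges to $\Delta_{(1)}(a)$ in $L^1(\tau)$-norm; pairing against $g$ viewed in $\calN=(L^1(\tau))^\ast$ and using $P^{(1)}_t=P_t$ on $\mathfrak{m}$ gives $t^{-1}\bigl(\tau(g\,P_t(a))-\tau(g\,a)\bigr)\to\tau(g\,\Delta_{(1)}(a))$. Because $a\in D(\Delta)$, the defining relation (\ref{generator of weak semigroup}) applied to the functional $f=g\in L^1(\tau)=\calN_\ast$ gives $t^{-1}\bigl(\tau(P_t(a)\,g)-\tau(a\,g)\bigr)\to\tau(\Delta(a)\,g)$. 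The trace property makes the two difference quotients literally equal, $\tau(g\,P_t(a))=\tau(P_t(a)\,g)$ and $\tau(g\,a)=\tau(a\,g)$, so the two limits agree: $\tau(g\,\Delta_{(1)}(a))=\tau(g\,\Delta(a))$ for every $g\in\mathfrak{m}$.

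It remains to upgrade this scalar identity to the equality $\Delta_{(1)}(a)=\Delta(a)$. Both $\Delta_{(1)}(a)\in L^1(\tau)$ and $\Delta(a)\in\calN$ embed into the $*$-algebra of $\tau$-measurable operators, so their difference is a well-defined measurable operator annihilated by $\tau(g\,\cdot)$ for all $g$ in the dense set $\mathfrak{m}$; faithfulness of $\tau$ together with the uniqueness in the noncommutative Radon--Nikodym identification $L^1(\tau)\cong\calN_\ast$ then forces the difference to vanish (equivalently, the normal functional $g\mapsto\tau(g\,\Delta_{(1)}(a))$ determines $\Delta_{(1)}(a)$ uniquely, and by the displayed identity it is represented by $\Delta(a)$ as well). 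This simultaneously shows $\Delta(a)\in L^1(\tau)$ and $\Delta(a)=\Delta_{(1)}(a)$. The $L^2$ statement is proved verbatim, replacing $\mathfrak{m}$ by $\mathfrak{n}$, the H\"older pairing $\calN\times L^1(\tau)$ by the inner product $\langle g,h\rangle_{L^2}=\tau(g^\ast h)$, the identity $P^{(1)}_t=P_t$ by $P^{(2)}_t=P_t$ on $\mathfrak{n}$, and invoking self-duality of $L^2(\tau)$ at the last step.

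The main obstacle, and the place where the semifinite-trace framework does real work, is twofold. First, the reduction to ``the same difference quotient'' relies on $P_t$ genuinely preserving the cores, $P_t(\mathfrak{m})\subseteq\mathfrak{m}$ and $P_t(\mathfrak{n})\subseteq\mathfrak{n}$, so that $P^{(1)}_t(a)$ and $P^{(2)}_t(a)$ really are the elements $P_t(a)$ rather than mere abstract $L^p$-limits; this is where symmetry of $\tau$ and contractivity on $L^1(\tau)$ and $L^2(\tau)$ are used. Second, the final point-separation step needs the compatible embeddings of $\calN$, $L^1(\tau)$ and $L^2(\tau)$ into the common space of $\tau$-measurable operators together with faithfulness of $\tau$, which is precisely what guarantees that agreement of the $\tau$-pairings on the dense subalgebra $\mathfrak{m}$ (resp. $\mathfrak{n}$) upgrades to genuine equality of the \emph{a priori} differently-typed elements $\Delta(a)$, $\Delta_{(1)}(a)$ and $\Delta_{(2)}(a)$.
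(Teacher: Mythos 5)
The paper states this proposition without proof, so there is no argument of the author's to compare yours against; judged on its own, your proof is correct and is the standard one. The key points are all in place: on the common core the three semigroups are literally the same map, the pairing $a(f)=\tau(af)=\tau(fa)$ identifies the weak$^*$ difference quotient with the $L^1$ (resp.\ $L^2$) one tested against a fixed element, and the separation of points of $L^1(\tau)+\calN$ (resp.\ $L^2(\tau)+\calN$) by the $\tau$-pairing against a dense subalgebra, together with faithfulness of $\tau$, upgrades the scalar identity to equality of operators. One small repair is needed in the $L^2$ case: you propose to run the argument ``verbatim with $\mathfrak{m}$ replaced by $\mathfrak{n}$,'' but the defining property \eqref{generator of weak semigroup} of $\Delta$ only guarantees convergence of the difference quotients against elements of $\calN_*=L^1(\tau)$, and a test element $g\in\mathfrak{n}=\calN\cap L^2(\tau)$ need not lie in $L^1(\tau)$, so the weak$^*$ limit cannot be invoked for such $g$. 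The fix is immediate: test against $g\in\mathfrak{m}$ instead (note $\tau(g^*g)\le\|g\|\,\tau(|g|)$, so $\mathfrak{m}\subseteq\mathfrak{n}$), which is still dense in $L^2(\tau)$ and still separates the points needed at the last step; with that substitution the rest of your argument goes through unchanged.
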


It is clear that $\Delta_{(2)}$ is a densely defined unbounded self-adjoint operator on $L^2(\tau)$, which is known to be closed. However, the complete characterization of $L^2$-generators of Markov semigroups is not known for the moment (see \cite{Holevo}). Let $\Delta_{(2)}^{\frac{1}{2}}$ be the unique closed self-adjoint square root of the $\Delta_{(2)}$, $D(\Delta_{(2)}^{\frac{1}{2}})$ be its domain of definition.

\begin{definition}[Dirichlet form of a semigroup]
	Let us fix a $W^*$-algebra $\calN$, Markov semigroup $(P_t)$ and a n.f.s. symmetric trace $\tau$ on $\calN$. Let $\Delta_{(2)}$ be the generator of $(P_t^{(2)})$. Define $D(\calE):=D(\Delta_{(2)}^{\frac{1}{2}})\subseteq L^2(\tau)$. A \textit{Dirichlet form} associated with $(\calN, (P_t), \tau)$ is a closed Hermitian form, uniquely defined on $D(\calE)\otimes D(\calE)$ via the equation:
	$$
	\calE(a,b):=\langle a, \Delta_{(2)}(b) \rangle_{L^2(\tau)},\ \forall a,b\in D(\Delta_{(2)})\subseteq D(\calE).
	$$
	The Dirichlet space is the Hilbert space $D(\calE)$ equipped with the inner product:
	$$
	\langle a, b \rangle_\calE := \langle a, b \rangle_{L^2} + \calE(a, b).
	$$
\end{definition}

\begin{definition}[Abstract Dirichlet form]
	Define an abstract Dirichlet form as a quadratic map $\calE: L^2 \to \R_+\cup {+\infty}$,
	$f \to \calE[f]$ that satisfies the conditions:
	\begin{itemize}
		\item the set $D(\calE):=\{f\in L^2: \calE[f]<\infty\}$ is dense in $L^2$,
		\item $D(\calE)$ is complete in the norm:
		$$
		\|a\|_\calE:=(\|a\|_L^2+\calE[a]))^{\frac{1}{2}},
		$$
		\item the associated Hermitian form $\calE(f,g):=\frac{1}{2}((1+i)(\calE[f]+\calE[g])-\calE[f-g]-i\calE[f-ig])$, defined on $D(\calE)\times D(\calE)$, is real:
		$\calE(a,b)=\calE(a^*, b^*)$ for all $a,b\in \calB:=\calA\cap D(\calE)$,
		\item for all $f\in Lip_0^1(\R)$, $a\in \calB^{s.a.}$
		\begin{equation}
		\calE[f(a)]\leq \calE[a],
		\end{equation}
		where $Lip_0^1(\R)$ is the space of all real 1-Lipschitz functions with a fixed point $0\in \R$.
	\end{itemize}
\end{definition}

For a densely defined Hermitian form  $\calE$ with domain $D(\calE)\subseteq L^2(\tau)$ let $\calE^{(n)}$ denote the Hermitian form on $L^2(\tau)\otimes (M_n, \mathrm{tr})$ given by 
\begin{align}
D(\calE^{(n)})&:=D(\calE)\otimes M_n,\\
\calE^{(n)}(\{a\}_{ij}, \{b\}_{ij})&:=\sum_{i,j}^n{\calE(a_{i,j}, b_{i,j})}.
\end{align}

Let $\calE$ on $L^2(\tau)$ be a Dirichlet form associated with $(\calN, (P_t), \tau)$. Then $\calE^{(n)}$ is an abstract Dirichlet form on $L^2(\tau)\otimes M_n$, $\tau^{(n)}:=\tau\otimes\mathrm{tr}$ for every $n\in \N$. Such forms are called \textit{completely} Dirichlet forms. In fact, symmetric Markovian semigroup are in one-to-one correspondence with completely Dirichlet forms generating conservative semigroup (Theorem 3.3 of \cite{DL92}).

A pair $(\calN, \tau)$ of a $W^*$-algebra and a n.f.s. trace will be called \textit{noncommutative measure space} throughout the paper. The quadriple $(\calN, \tau, \calE, D(\calE))$, where $\calE$ is a completely Dirichlet form generating Markov semigroup, is called (by a slight abuse of notation) by \textit{Dirichlet space}.

\section{Regularity and differential calculus}
Let us fix a triple $(\calN, (P_t), \tau)$ of a $W^*$-algebra, Markov semigroup and a symmetric n.f.s. trace. Let $\calE$ be the associated completely Dirichlet form.
\begin{definition}[Regular subalgebra]
	An algebra $\calA\subseteq \calN$ is called regular iff
	\begin{enumerate}
		\item restriction of $\tau$ on $\calA$ is a faithful, $\|\cdot\|$-semi-finite trace,
		\item $\calA_\tau:=\calA\cap \mathfrak{n}$ is norm dense in $\calA$,
		\item $D(\calE)\cap \calA$ is norm dense in $\calA$,
		\item $D(\calE)\cap \calA$ is dense in $D(\calE)$ w.r.t. the inner product $\langle \cdot, \cdot \rangle_\calE$.
	\end{enumerate} 
\end{definition}
It can be checked, that a $C^*$-subalgebra generated by a regular subalgebra $\calA\subseteq \calN$ is regular itself. The following statement allows one to find a regular $C^*$-subalgebra.

\begin{proposition}[Proposition 2.7 of \cite{DL92}]
	$L^1(\tau)\cap D(\calE) \cap \calN$ is a regular subalgebra.
\end{proposition}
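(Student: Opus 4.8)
The plan is to rewrite $\calA:=L^1(\tau)\cap D(\calE)\cap\calN$ in a more usable form and then verify the four defining conditions of regularity one by one, isolating the single substantial one. First I would note that $L^1(\tau)\cap\calN=\mathfrak{m}$ and $D(\calE)\subseteq L^2(\tau)$, so $\calA=\mathfrak{m}\cap D(\calE)$; since $\mathfrak m$ is a $^*$-subalgebra and $\calN\cap D(\calE)$ is an involutive subalgebra (the Davies--Lindsay fact recalled in Section 2), $\calA$ is a $^*$-subalgebra of $\calN$. Next I record the inclusions that trivialize three of the four conditions. Because $D(\calE)\subseteq L^2(\tau)$ we get $\calA\subseteq\calN\cap L^2(\tau)=\mathfrak n$, hence $\calA\cap\mathfrak n=\calA$ (condition 2); because $\calA\subseteq D(\calE)$ we have $D(\calE)\cap\calA=\calA$ (condition 3); both density requirements then hold tautologically. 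For condition 1, faithfulness and the trace property of $\tau|_\calA$ are inherited from $\tau$, and since $\calA\subseteq L^1(\tau)$ every $a\in\calA^+$ satisfies $\tau(a)<\infty$, so $\Dom(\tau|_\calA)=\calA^+$ is norm-dense in $\calA$ and $\tau|_\calA$ is (trivially) $\|\cdot\|$-semi-finite.

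The real content is condition 4: that $\calA$ is dense in the Dirichlet space $D(\calE)$ for the form norm $\|\cdot\|_\calE$. Here I would use the resolvent $G_\lambda:=\int_0^\infty e^{-\lambda t}P_t^{(2)}\,dt=(\lambda+\Delta_{(2)})^{-1}$ for $\lambda>0$, the standard object attached to the strongly continuous contraction semigroup $(P_t^{(2)})$. It maps $L^2(\tau)$ into $D(\Delta_{(2)})\subseteq D(\calE)$, satisfies $\|G_\lambda\|\le 1/\lambda$, and, for fixed $\lambda$, is bounded as a map $L^2(\tau)\to D(\calE)$ via $\calE[G_\lambda\eta]=\langle G_\lambda\eta,\Delta_{(2)}G_\lambda\eta\rangle=\langle G_\lambda\eta,(\Id-\lambda G_\lambda)\eta\rangle\le\lambda^{-1}\|\eta\|_{L^2}^2$. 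The two soft facts I rely on are that $\lambda G_\lambda\to\Id$ strongly in the form norm on $D(\calE)$ (Yosida-type approximation for the closed nonnegative form) and that $\mathfrak m$ is $\|\cdot\|_{L^2}$-dense in $L^2(\tau)$.

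The key step is to show $G_\lambda$ preserves $\mathfrak m$. Because $\tau$ is symmetric, the semigroup is consistent across $L^1(\tau)$, $L^2(\tau)$ and $\calN=L^\infty$: for $a\in\mathfrak m=L^1(\tau)\cap\calN$ one has $\|P_t^{(2)}a\|_{L^1}\le\|a\|_{L^1}$ (contractivity of $(P_t^{(1)})$) and $\|P_t^{(2)}a\|_\infty\le\|a\|_\infty$ (Markovianity of $P_t$), so $P_t^{(2)}(\mathfrak m)\subseteq\mathfrak m$. Integrating, $\|G_\lambda a\|_{L^1}\le\lambda^{-1}\|a\|_{L^1}$ and $\|G_\lambda a\|_\infty\le\lambda^{-1}\|a\|_\infty$, whence $G_\lambda a\in L^1(\tau)\cap\calN=\mathfrak m$; as $G_\lambda a\in D(\calE)$ as well, we conclude $G_\lambda(\mathfrak m)\subseteq\calA$. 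Then, given $\xi\in D(\calE)$ and $\varepsilon>0$, I would first choose $\lambda$ with $\|\lambda G_\lambda\xi-\xi\|_\calE<\varepsilon/2$, and then use $L^2$-density of $\mathfrak m$ together with the boundedness of $\lambda G_\lambda:L^2(\tau)\to D(\calE)$ to pick $a\in\mathfrak m$ with $\|\lambda G_\lambda a-\lambda G_\lambda\xi\|_\calE<\varepsilon/2$; since $\lambda G_\lambda a\in\calA$, this gives $\|\lambda G_\lambda a-\xi\|_\calE<\varepsilon$, proving condition 4.

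The main obstacle is precisely this density in form norm: one must produce approximants that simultaneously lie in $L^1(\tau)$, are bounded, and converge in the stronger $\|\cdot\|_\calE$-topology rather than merely in $L^2(\tau)$. The resolvent meets all three demands at once — it smooths into $D(\Delta_{(2)})$, and its representation as a Laplace transform of the $L^p$-consistent semigroup is exactly what keeps the approximants inside $\mathfrak m$. The one technical point requiring care is the consistency of the vector-valued integral defining $G_\lambda a$ across $L^1(\tau)$, $L^2(\tau)$ and $\calN$, i.e. that the three integrals yield the same element; this follows from the compatible embeddings of these spaces and the uniform bounds on $P_t^{(2)}a$.
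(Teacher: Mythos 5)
The paper does not prove this proposition — it is quoted directly as Proposition 2.7 of Davies--Lindsay \cite{DL92} — so there is no in-paper argument to compare against. Your proof is correct and follows the standard resolvent route used in that reference: the reduction of conditions (1)--(3) to trivialities via $\calA=\mathfrak m\cap D(\calE)\subseteq\mathfrak n$ is right, and the substantial step (form-norm density in $D(\calE)$) is handled soundly by the bound $\calE[G_\lambda\eta]\le\lambda^{-1}\|\eta\|_{L^2}^2$, the $L^1$- and $L^\infty$-contractivity of the symmetric Markov semigroup ensuring $G_\lambda(\mathfrak m)\subseteq\calA$, and the Yosida approximation $\lambda G_\lambda\to\Id$ in the form norm.
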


\begin{definition}
	A completely Dirichlet form is called $C^*$-Dirichlet form if a regular $C^*$-subalgebra is given.
\end{definition}

Thus, every completely Dirichlet form is a $C^*$-Dirichlet form for \textit{some} $C^*$-subalgebra. In fact, it is possible to start with a $C^*$-algebra $\calA$ and a faithful semi-finite trace on it, then construct an $L^2(\tau)$-space, the G.N.S. representation of $\calA$ on $L^2(\tau)$, and set $L^\infty(\tau)$ to be a weak$^*$-closure of the representation of $\calA$. It appears that if $\calA=\calN$ is a $W^*$-algebra, $L^\infty(\tau)=\calN$.

\begin{proposition}[Proposition 3.4 of \cite{DL92}, Proposition 2.2 of \cite{CS03}]
	If $\calE$ is a $C^*$-Dirichlet form w.r.t. $C^*$-algebra $\calA$, the space $\calB:=\calA\cap D(\calE)$ is a $*$-algebra w.r.t. multiplication and involution induced from $\calA$.
\end{proposition}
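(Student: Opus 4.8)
The only two properties that are not immediate are stability of $\calB$ under the involution and under multiplication, since $\calB=\calA\cap D(\calE)$ is visibly a complex linear subspace of the $*$-algebra $\calA$. The plan is to obtain both from the Markovian contraction axiom of $\calE$, used together with the amplified forms $\calE^{(n)}$. For the involution I would first record that each Markov map $P_t$ is positivity preserving, hence $*$-preserving, i.e. $P_t(a^*)=P_t(a)^*$. Thus the conjugate-linear map $J:h\mapsto h^*$ commutes with every $P^{(2)}_t$ on $L^2(\tau)$, hence with the generator $\Delta_{(2)}$ and with its spectral square root $\Delta_{(2)}^{1/2}$. Consequently $D(\calE)=D(\Delta_{(2)}^{1/2})$ is $J$-invariant, and since $\calA$ is closed under $*$, any $a\in\calB$ gives $a^*\in\calA\cap D(\calE)=\calB$.

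The core of the multiplicative statement is that the Markovian axiom already forces self-adjoint squares into $\calB$. Given $x\in\calB^{s.a.}$ with $x\ne0$, I would take the function $f$ that equals $s\mapsto s^2/(2\|x\|)$ on $[-\|x\|,\|x\|]$ and is extended to a globally $1$-Lipschitz function with $f(0)=0$; then $f\in Lip_0^1(\R)$, and since $\Spec(x)\subseteq[-\|x\|,\|x\|]$, the continuous functional calculus gives $f(x)=x^2/(2\|x\|)\in\calA$. The contraction property then yields
\[
\calE[x^2]=4\|x\|^2\,\calE[f(x)]\le 4\|x\|^2\,\calE[x]<\infty,
\]
so $x^2\in\calB$, with a quantitative energy bound as a byproduct. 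Polarization $xy+yx=\tfrac12\big((x+y)^2-(x-y)^2\big)$ shows next that the Jordan product of any two self-adjoint elements of $\calB$ again lies in $\calB$.

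To capture the genuinely noncommutative commutator I would pass to matrices. The form $\calE^{(2)}$ on $L^2(\tau)\otimes M_2$ is again an abstract Dirichlet form, and one checks $M_2(\calA)\cap D(\calE^{(2)})=M_2(\calB)$ using $D(\calE^{(2)})=D(\calE)\otimes M_2$, so the two preceding steps apply verbatim to $\calE^{(2)}$. Applying the anticommutator result inside $M_2$ to the self-adjoint matrices $P=\left(\begin{smallmatrix}0&x\\x&0\end{smallmatrix}\right)$ and $Q=\left(\begin{smallmatrix}y&0\\0&-y\end{smallmatrix}\right)$, whose entries lie in $\calB$, produces $PQ+QP=\left(\begin{smallmatrix}0&-[x,y]\\ {[x,y]}&0\end{smallmatrix}\right)\in M_2(\calB)$ where $[x,y]:=xy-yx$; reading off an off-diagonal entry gives $[x,y]\in\calB$. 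Combining with the Jordan product shows $xy\in\calB$ for all self-adjoint $x,y\in\calB$, and finally bilinearity together with the already established $*$-stability (writing $a=\mathrm{Re}\,a+i\,\mathrm{Im}\,a$ with both parts self-adjoint and in $\calB$) upgrades this to $ab\in\calB$ for arbitrary $a,b\in\calB$.

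The step I expect to be the genuine obstacle is the commutator: squares and anticommutators never generate $xy-yx$, so the \emph{complete} Dirichlet property is indispensable here, and the decisive idea is to realize $[x,y]$ as an off-diagonal entry of an anticommutator in $M_2(\calN)$. The remaining points are routine and I would verify them only briefly: that $\calE^{(2)}$ is an abstract Dirichlet form (granted in the excerpt), that the identification $M_2(\calA)\cap D(\calE^{(2)})=M_2(\calB)$ is legitimate, and that the truncated quadratic $f$ indeed belongs to $Lip_0^1(\R)$ and satisfies $f(x)=x^2/(2\|x\|)$ on the spectrum of $x$.
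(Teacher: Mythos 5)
Your argument is correct. Note first that the paper itself supplies no proof of this proposition: it is imported verbatim from Proposition 3.4 of \cite{DL92} and Proposition 2.2 of \cite{CS03}, so there is nothing internal to compare against. Your proof is, in essence, the standard one from those references: $*$-stability of $\calB$ comes from the semigroup commuting with the involution, and the product comes from the Markovian contraction applied to a truncated square function together with the \emph{complete} Dirichlet property realized on $M_2$. The only real difference is one of routing. The references typically take a single self-adjoint matrix $\left(\begin{smallmatrix}0&a\\a^*&0\end{smallmatrix}\right)\in M_2(\calB)$, whose square is $\left(\begin{smallmatrix}aa^*&0\\0&a^*a\end{smallmatrix}\right)$, giving $aa^*\in\calB$ directly, and then recover $ab^*$ by the complex polarization $ab^*=\tfrac14\sum_{k=0}^{3}i^k(a+i^kb)(a+i^kb)^*$; this collapses your separate Jordan-product and commutator steps into one. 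Your route through $P=\left(\begin{smallmatrix}0&x\\x&0\end{smallmatrix}\right)$, $Q=\left(\begin{smallmatrix}y&0\\0&-y\end{smallmatrix}\right)$ is a perfectly valid variant of the same idea and has the pedagogical merit of isolating exactly where complete (as opposed to mere) Dirichlet positivity is indispensable, namely in producing the commutator $[x,y]$; the price is the extra reduction $a=\mathrm{Re}\,a+i\,\mathrm{Im}\,a$ at the end. All the points you flag as routine (that $\calE^{(2)}$ is an abstract Dirichlet form, that $M_2(\calA)\cap D(\calE^{(2)})=M_2(\calB)$ since $D(\calE^{(2)})=D(\calE)\otimes M_2$, and that the truncated quadratic lies in $Lip_0^1(\R)$ and agrees with $s\mapsto s^2/(2\|x\|)$ on $\Spec(x)$) do check out.
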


%\begin{question}
%	Is there the greatest regular subalgebra? 
%\end{question}
A Hilbert bimodule over $C^*$-algebra $\calA$ is a $*$-representation of $\calA\otimes_{max}\calA^\circ$ on a Hilbert space, where $\otimes_{max}$ is a maximal $C^*$-tensor product and $\calA^\circ$ is an algebra opposite to $\calA$ (the same algebra with reversed order of multiplication).

Let $\calA$ be a $C^*$-algebra, $\tau$ be a faithful semi-finite trace on $\calA$, $\calB\subseteq \calA\cap L^2(\tau)$ be a given $^*$-subalgebra dense in both spaces.
\begin{definition}[Symmetric differential calculus]
	The triple $(\calH, J, \partial)$ is called a symmetric differential calculus iff
	\begin{enumerate}
		\item $\calH$ is a Hilbert bimodule over $\calA$,
		\item $J: \calH \to \calH$ is an antilinear involution,
		$$
		J(a h b)= b^* J(h) a^*,
		$$
		\item $\partial: \calB \to \calH$ is a symmetric derivation:
		$$
		\partial(a^*)=J(\partial(a)),
		$$
		$$
		\partial(ab)=\partial(a)b+a\partial(b),
		$$
		which is \textit{closable} as a linear operator from $L^2(\tau)$ to $\calH$.
	\end{enumerate}
\end{definition}
It follows, in particular, that
\begin{equation}
\|c(a\otimes b)\|_\calH \leq \|c\| \|a\otimes b\|_\calH, \; \; \|(a\otimes b)d\|_\calH \leq \|d\| \|a\otimes b\|_\calH,
\end{equation}
\begin{equation}
\langle h a, g\rangle_\calH = \langle h, g a^*\rangle_\calH, \; \; \langle a h, g\rangle_\calH = \langle h, a^*g\rangle_\calH.
\end{equation}
for $a,b,c\in \calB$, $h,g\in \calH$.

\begin{theorem}[Theorem 8.2 of \cite{CS03}]
	\label{diff calculus from dirichlet}
	For every Dirichlet space $(\calN, \tau, \calE, D(\calE))$ and a regular $C^*$-algebra $\calA$, $\calB:=D(\calE)\cap \calA$ is a $^*$-subalgebra dense in both spaces, and there exists a symmetric differential calculus $(\calH, J, \partial)$ such that
	\begin{itemize}
		\item $\calH$ is the completion of $\calB \otimes \calB / \ker \|\cdot\|_\calH$ with respect to the norm:
		\begin{equation}
		\|a\otimes b\|_\calH = \frac{1}{2}(\calE(a,abb^*)+\calE(abb^*,a)-\calE(bb^*,a^*a)),
		\end{equation}
		\begin{equation}
		\ker \|\cdot\|_\calH:=\left\{\sum_i a_i\otimes b_i \in \calB\otimes \calB: \left\|\sum_i a_i\otimes b_i \right\|_\calH=0 \right\},
		\end{equation}
		\item $\calH$ is equipped with a Hilbert bimodule structure over $\calA$, defined on $\calB$ by the formulas:
		\begin{equation}
		c(a\otimes b):=(c a)\otimes b - c\otimes (ab),
		\end{equation}
		\begin{equation}
		(b\otimes c)a:=b\otimes (ca),
		\end{equation}
		for all $a,b,c\in \calB$.
		\item 
		\begin{eqnarray}
		\partial(a)b=a\otimes b,\\
		\calE(a,b)=\langle \partial a, \partial b \rangle_\calH.
		\end{eqnarray}
	\end{itemize}
\end{theorem}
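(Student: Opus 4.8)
The plan is a GNS-type construction built on the bilinear expression in the statement. First I define a sesquilinear form on the algebraic tensor product $\calB\otimes\calB$, conjugate-linear in the first variable, by polarising the quadratic expression:
$$
\langle a\otimes b,\,c\otimes d\rangle_\calH:=\tfrac12\bigl(\calE(a,cdb^*)+\calE(abd^*,c)-\calE(bd^*,a^*c)\bigr),
$$
extended biadditively. Putting $c=a$, $d=b$ returns the stated formula for $\|a\otimes b\|_\calH$. Each term is additive in every entry and satisfies $\langle(\lambda a)\otimes b,\eta\rangle_\calH=\langle a\otimes(\lambda b),\eta\rangle_\calH=\overline{\lambda}\,\langle a\otimes b,\eta\rangle_\calH$, so the form is well defined on $\calB\otimes_\C\calB$ and conjugate-linear in its first argument; Hermitian symmetry $\overline{\langle\xi,\eta\rangle_\calH}=\langle\eta,\xi\rangle_\calH$ follows from the reality condition $\calE(x,y)=\calE(x^*,y^*)$ together with the trace property of $\tau$.

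The crucial and hardest point is positive semi-definiteness, $q(\xi):=\langle\xi,\xi\rangle_\calH\ge0$ for every finite $\xi=\sum_i a_i\otimes b_i$, and this is exactly where the \emph{completely} Dirichlet hypothesis is indispensable. I would introduce the carré du champ: a sesquilinear $L^1(\tau)$-valued map $\Gamma(a,c)$, conjugate-linear in $a$, determined by $\langle a\otimes b,c\otimes d\rangle_\calH=\tau(db^*\,\Gamma(a,c))$ and satisfying $\tau(\Gamma(a,c))=\calE(a,c)$ (using conservativity $\tau\circ\Delta_{(2)}=0$). By the trace property $q(\xi)=\sum_{i,j}\tau(b_i^*\,\Gamma(a_i,a_j)\,b_j)$, so positivity reduces to $[\Gamma(a_i,a_j)]_{i,j}\ge0$ in $M_n(\calN)$. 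To obtain this I assemble $A:=\sum_i e_{1i}\otimes a_i\in M_n\otimes\calB$ and observe that $[\Gamma(a_i,a_j)]_{i,j}$ is precisely the carré du champ $\Gamma^{(n)}(A,A)$ of the amplified form $\calE^{(n)}$; since $\calE^{(n)}$ is again an abstract Dirichlet form, its Markovian contraction $\calE^{(n)}[f(\cdot)]\le\calE^{(n)}[\cdot]$ forces its carré du champ to be positive, whence $\Gamma^{(n)}(A,A)\ge0$ and $q(\xi)\ge0$. I expect this reduction — matching the matrix amplification of $\Gamma$ with $\calE^{(n)}$ and deducing positivity of the carré du champ from the Dirichlet property — to be the main obstacle.

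With positivity in hand, I set $\ker\|\cdot\|_\calH=\{\xi:q(\xi)=0\}$, pass to the quotient, and complete to obtain the Hilbert space $\calH$. The bimodule structure is then given on simple tensors by $c(a\otimes b):=(ca)\otimes b-c\otimes(ab)$ and $(b\otimes c)a:=b\otimes(ca)$; I verify these respect the tensor relations and the kernel and are bounded, $\|c(a\otimes b)\|_\calH\le\|c\|\,\|a\otimes b\|_\calH$ and $\|(a\otimes b)d\|_\calH\le\|d\|\,\|a\otimes b\|_\calH$, which follow from the operator-valued Cauchy--Schwarz inequality for the now-positive $\Gamma$ (applied with $\|c\|^2 1-c^*c\ge0$). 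Boundedness lets the actions extend to $\calH$ and combine into a $*$-representation of $\calA\otimes_{max}\calA^\circ$, so $\calH$ is a Hilbert bimodule over $\calA$. Finally I define $\partial(a)b:=a\otimes b$ (reading off $\partial a$ through an approximate identity of $\calB$ when $\calA$ is non-unital, with closability supplied by the $L^2\to\calH$ bound); the Leibniz rule $\partial(ab)=(\partial a)b+a(\partial b)$ is a direct check against the left-action formula, the identity $\calE(a,b)=\langle\partial a,\partial b\rangle_\calH$ is the diagonal case of the polarised form, and the antilinear involution $J(a\otimes b):=b^*\,\partial(a^*)$ is well defined and isometric by the reality of $\calE$, yielding simultaneously $\partial(a^*)=J(\partial a)$ and $J(ahb)=b^*J(h)a^*$.
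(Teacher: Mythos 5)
The paper does not prove this statement at all: it is imported verbatim as Theorem 8.2 of \cite{CS03}, so there is no internal argument to compare yours against. Measured against the original Cipriani--Sauvageot construction, your outline follows the right strategy (a GNS-type completion of $\calB\otimes\calB$ with positivity extracted from the \emph{completely} Dirichlet hypothesis via matrix amplification), and your polarised form, kernel quotient, Leibniz check and definition of $J$ are all consistent with the statement. Note also that you silently use conservativity, $\calE(1,\cdot)=0$, to get $\langle\partial a,\partial b\rangle_\calH=\calE(a,b)$ from the $b=d=1$ case of your polarised formula; that hypothesis should be made explicit.

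There are two genuine gaps. First, the step you yourself flag as the main obstacle is not actually carried out: the assertion that the Markovian contraction property $\calE^{(n)}[f(\cdot)]\le\calE^{(n)}[\cdot]$ ``forces its carr\'e du champ to be positive'' is exactly the hard analytic content of the theorem, and it does not follow in one line from the Lipschitz-contraction axiom. The standard route is to approximate $\calE$ by the bounded forms $\calE_t(a,b)=\tfrac1t\langle a,(I-P_t)b\rangle_{L^2(\tau)}$, compute their carr\'e du champ explicitly, and invoke the Schwarz inequality $P_t(x)^*P_t(x)\le P_t(x^*x)$ (and its matrix amplifications) for the completely positive maps $P_t$, then pass to the limit $t\searrow 0$ on the common core; without some version of this your positivity claim is unsupported, and with it the matrix-amplification reduction you describe becomes unnecessary scaffolding rather than the proof itself. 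Second, the statement includes the claim that $\calB=D(\calE)\cap\calA$ is a $^*$-subalgebra dense in both spaces, which you use throughout (the expressions $abb^*$, $a^*a$, $cdb^*$ must lie in $D(\calE)$ for the form to be defined) but never establish; this is itself a nontrivial theorem of Davies and Lindsay, quoted in the paper as Proposition 3.4 of \cite{DL92}, and it rests on the same complete-positivity machinery. Related smaller issues: boundedness of the \emph{left} action $c(a\otimes b)=(ca)\otimes b-c\otimes(ab)$ requires first verifying the adjoint relation $\langle ch,g\rangle_\calH=\langle h,c^*g\rangle_\calH$ so that the module Cauchy--Schwarz argument applies (the right-action bound you give is the easy one), and well-definedness of $\partial a$ itself in the non-unital case needs the approximate-identity limit to be shown convergent in $\calH$, not just named.
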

By abuse of notation an element $a\otimes b\in \calB\otimes \calB$ and its image in $\calH$ will be denoted by the same symbol.

\begin{definition}
	Let $(\calA, \tau)$ and $\calB$ be as above. Define a category of symmetric differential calculi with symmetric differential calculi as objects and bimodule isometric isomorphisms commuting with derivations and involutions as morphisms, i.e. morphisms are maps of the form $T: (\calH_1, J_1, \partial_1) \mapsto (\calH_2, J_2, \partial_2)$
	\begin{align*}
	&T: \calH_1 \mapsto \calH_2,\\
	&T_1(a h b)= a T_2(h) b,\; \forall a,b\in \calA\\
	&T\circ\partial_1=\partial_2,\\
	&J_1\circ T = J_2.
	\end{align*}
\end{definition}

\begin{theorem}[Theorem 8.3 of \cite{CS03}]
	For every Dirichlet space $(\calN, \tau, \calE, D(\calE))$ and a regular $C^*$-algebra $\calA$,
	the symmetric differential calculus $(\calH, J, \partial)$ defined in (\ref{diff calculus from dirichlet}) is an initial object in the full subcategory of all symmetric differential calculus satisfying
	$$
	\calE(a,b)=\langle \partial a, \partial b \rangle.
	$$
\end{theorem}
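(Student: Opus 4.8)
The category is set up so that ``initial'' means: for every object $(\calH', J', \partial')$ realizing the same form, i.e. with $\calE(a,b)=\langle \partial' a, \partial' b\rangle_{\calH'}$, there is a \emph{unique} morphism $T\colon \calH\to\calH'$ (a bimodule map with $T\circ\partial=\partial'$ and $T\circ J=J'\circ T$). The plan is to pin down $T$ on generators and then show the only possible formula actually works. By construction $\{\partial(a)\,b : a,b\in\calB\}$ is total in $\calH$, since $\partial(a)b=a\otimes b$ and $\calB\otimes\calB$ is dense. Any morphism must satisfy $T(\partial(a)b)=T(\partial a)\,b=\partial'(a)\,b$ by the intertwining and right-module properties, so $T$ is forced on a total set and hence is unique. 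The whole content is therefore to \emph{construct} it: define $T_0\colon \calB\otimes\calB\to\calH'$ by $T_0(a\otimes b):=\partial'(a)\,b$ and show it is isometric for $\|\cdot\|_\calH$, descends to $\calB\otimes\calB/\ker\|\cdot\|_\calH$, and extends continuously.

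The key lemma, which is also where the real work lies, is that in \emph{any} symmetric differential calculus satisfying $\langle\partial x,\partial y\rangle=\calE(x,y)$ the inner products of elementary tensors are a fixed expression in the values of $\calE$ alone:
$$
\langle \partial(a)\,b,\ \partial(c)\,d\rangle \;=\; \tfrac12\bigl(\calE(a,\,cdb^*)-\calE(c^*a,\,db^*)+\calE(c^*,\,db^*a^*)\bigr).
$$
I would prove this by setting $B(a;c,u):=\langle \partial a,\ \partial(c)\,u\rangle$, noting $\langle\partial(a)b,\partial(c)d\rangle=B(a;c,db^*)$ via the adjoint relation $\langle \xi b,\eta\rangle=\langle\xi,\eta b^*\rangle$, and then peeling the expression apart using the Leibniz rule $\partial(c)u=\partial(cu)-c\,\partial(u)$ together with the two adjoint relations and $\langle\partial x,\partial y\rangle=\calE(x,y)$. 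This reduces $B(a;c,u)$ to a sum of three $\calE$-values plus a single residual term $\langle \partial(c^*),\,u\,\partial(a^*)\rangle$. The crucial step is to rewrite this residual term: using $\partial(x^*)=J\partial(x)$ and $J(\xi b)=b^*J(\xi)$ one gets $u\,\partial(a^*)=J(\partial(a)\,u^*)$, and then anti-unitarity of $J$, $\langle J\xi,J\eta\rangle=\langle\eta,\xi\rangle$, gives $\langle \partial(c^*),\,u\,\partial(a^*)\rangle=\langle\partial(a)u^*,\partial(c)\rangle=B(a;c,u)$. Hence $B$ appears on both sides and the recursion closes: $2B(a;c,u)=\calE(a,cu)-\calE(c^*a,u)+\calE(c^*,ua^*)$, which with $u=db^*$ is the claimed formula.

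Granting the lemma, the argument finishes quickly. Applying it inside $\calH$ (which is itself such a calculus, by Theorem~\ref{diff calculus from dirichlet}) and inside $\calH'$ gives the same right-hand side, so $\langle T_0(a\otimes b),T_0(c\otimes d)\rangle_{\calH'}=\langle a\otimes b,c\otimes d\rangle_\calH$; thus $T_0$ preserves the seminorm, annihilates $\ker\|\cdot\|_\calH$, and extends to an isometry $T\colon\calH\to\calH'$. The bimodule identities are checked on the dense set: $c(a\otimes b)=(ca)\otimes b-c\otimes(ab)\mapsto \partial'(ca)b-\partial'(c)(ab)=c\,\partial'(a)\,b$ by Leibniz, and $(a\otimes b)d=a\otimes(bd)\mapsto \partial'(a)(bd)$, so $T$ is a bimodule map after extension by continuity. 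Then $T(\partial(a))\,b=\partial'(a)\,b$ for all $b\in\calB$ forces $T\circ\partial=\partial'$ (using an approximate identity and nondegeneracy of the right action), and $T(J(a\otimes b))=T(b^*\partial(a^*))=b^*\partial'(a^*)=b^*J'(\partial'(a))=J'(\partial'(a)b)=J'(T(a\otimes b))$ gives $T\circ J=J'\circ T$.

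The main obstacle is precisely the closure of the recursion in the key lemma. A left-adjoint-only manipulation of the residual term yields only a tautology, leaving $B$ undetermined; it is the metric compatibility of the real structure, $\langle J\xi,J\eta\rangle=\langle\eta,\xi\rangle$, that makes the inner product of elementary tensors a function of $\calE$ alone. I would therefore be explicit that the anti-unitary involution $J$ is an essential structural input here, not merely auxiliary. A final minor point: the constructed $T$ is an isometry onto the closed sub-bimodule generated by $\partial'(\calB)$, so it is an isomorphism exactly when the target is generated by its derivation; for the universal property it suffices that $T$ is the unique structure-preserving morphism, which the above provides.
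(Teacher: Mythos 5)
The paper does not prove this statement: it is quoted verbatim as Theorem~8.3 of \cite{CS03}, so there is no internal proof to compare against. Your argument is, in substance, the Cipriani--Sauvageot proof: uniqueness of the morphism because $\partial(\calB)\calB$ is total and any morphism is forced on it, and existence via the key lemma that $\langle\partial(a)b,\partial(c)d\rangle$ is a universal expression in $\calE$ alone, obtained by a Leibniz/adjoint recursion that closes only because the residual term reproduces $B(a;c,u)$ through the anti-unitarity of $J$. Your derivation $2B(a;c,u)=\calE(a,cu)-\calE(c^*a,u)+\calE(c^*,ua^*)$ checks out, and at $a=c$, $b=d$ it is consistent (via reality and Hermitian symmetry) with the norm formula the paper records in Theorem~\ref{diff calculus from dirichlet}. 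Two remarks worth keeping explicit: (i) the paper's definition of symmetric differential calculus asks only that $J$ be an antilinear involution with $J(ahb)=b^*J(h)a^*$ and never states that $J$ is isometric, yet your crucial step uses $\langle J\xi,J\eta\rangle=\langle\eta,\xi\rangle$ --- you are right to flag this as an essential structural input rather than a cosmetic one, and the statement is false without it; (ii) the paper's category declares morphisms to be isometric \emph{isomorphisms}, under which ``initial object'' would degenerate, so your reading of morphisms as structure-preserving (not necessarily surjective) isometries, with the closing remark on when $T$ is onto, is the correct repair.
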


The derivation $\partial$ constructed in (\ref{diff calculus from dirichlet}) is called the \textit{gradient operator} associated with Dirichlet form $(\calE, D(\calE))$.

Since $\partial$ is a closable operator, it can be extended to a closed one, denoted by the same symbol: $\partial: L^2(\tau) \rightarrow \calH$. It acts between Hilbert spaces, and one is able to define its adjoint: $\partial^*: \calH\rightarrow L^2(\tau)$. Since $\partial$ is thought of as a noncommutative analogue of gradient, it is reasonable to interpret $\partial^*$ as an analogue of divergence. Theorem 8.2 of \cite{CS03} asserts that the generator $\Delta_{(2)}$ of the semigroup associated with a Dirichlet form can be expressed as a composition $\Delta_{(2)}=\partial^*\circ \partial$ on $D(\Delta_{(2)})$, and, hence, can be thought of as a generalization of a Laplace operator.

\subsection{Examples of noncommutative differential calculi}
\subsubsection{Group algebras with length functions}
\label{group algebra example}
Consider a locally-compact group $G$ that is unimodular (it means that the left and the right Haar measures coincide). On the space of all continuous complex-valued function with compact support define multiplication ``$\star$'' as the convolution and involution by the formula: $f^*(g):=\overline{f(g^{-1})},\ f\in C_c(G),\ g\in G$. The norm on $C_c(G)$ is defined as follows: $\|f\|:=\sup\{\|f\star h\|_{L^2(\chi)}: h\in L^2(\chi),\ \|h\|_{L^2(\chi)}=1 \}$. Then one can consider the corresponding reduced $C^*$-algebra $C^*_{r}(G)$, which is the completion of $C_c(G)$ w.r.t. the defined norm. It is commutative iff $G$ is abelian. A reference trace is defined as a continuous extension of $\tau(f):=f(e)$, where $f\in C_c(G)$ and $e\in G$ is the identity element.

Equip $G$ with a continuous length function $l: G\rightarrow [0,+\infty)$ that is conditionally of negative type. For the definition see, for example, Section 2.5 of \cite{Zeng14}, or Section 2.10 of \cite{BHV07}. The corresponding Dirichlet form is defined by the formula: $\calE(f):=\int_G |f(g)|^2 l(g) d\chi_G(g)$. This a conservative regular completely Dirichlet form, as shown in Example 10.2 of \cite{CS03}. This example is also described in \cite{CS12} (Example 2.7 and Example 5.2), where an explicit construction of the ``tangent'' bimodule and the associated gradient derivation is provided.

\subsubsection{Dynamical systems}
When an action of a group is given by homeomorphisms of some topological space, it is possible to construct a convolution $C^*$-algebra of the associated action groupoid. If the group is equipped with some kind of metric data, it is possible to define a Dirichlet space associated with the action and this data. Probably, there is no description of the construction in general case, but there are several well-studied examples. 

The first one is a classic example of noncommutative torus. In fact, a 2-dimensional noncommutative torus is the $C^*$-algebra associated with the action of $\Z$ on $\S^1$ by irrational rotations. The canonical trace and a Dirichlet form is described in Example 2.8 of \cite{CS12}. A description of the associated differential calculus can be found in Example 5.3 of \cite{CS12} and Subsection 10.6 of \cite{CS03}. Some results about concentration are described in Subsection 7.1.2 of \cite{Zeng14}.

Another example of this type, which appears in a physical model, is described in Example 5.4 of \cite{CS12}.

\subsubsection{Riemannian foliation}
J.-L. Sauvageout in \cite{Sav96} described a construction of a transverse heat flow on a Riemannian foliation and the associated noncommutative Dirichlet space.

\subsubsection{Clifford bundles}
%Subsection 10.5 of \cite{CS03}.
A classic example of a noncommutative Dirichlet space is the one associated with a Clifford $C^*$-algebra. See Subsection 10.5 of \cite{CS03} for the definition and the description of the associated ``tangent" bimodule.

More generally, one can consider a Dirichlet space associated to a Clifford bundle of a Riemannian manifold. The references are the paper \cite{CS03-2}, Example 2.6 in \cite{CS12} and Subsection 10.7 of \cite{CS03}.

\section{Carr\'{e} du Champ form}

As in the classic theory of Dirichlet spaces, a finer analysis requires a notion of a ``Carr\'{e} du Champ'' (or gradient) form. In probability theory it is a measure-valued quadratic form with the same domain as the Dirichlet form, satisfying the equality $\calE[a]=\int 1 d\Gamma[a]$. In noncommutative setting $\Gamma$ should have its values in the positive part of the Banach dual of a $C^*$-algebra $\calA$ and should be connected with $\calE$ in the similar manner: $\calE[a]=\langle\Gamma[a], 1_{\calA^{**}}\rangle:=1_{\calA^{**}}(\Gamma[a])$. It is worth to note, that the double dual of a $C^*$-algebra is always a $W^*$-algebra and, hence, has a multiplication identity $1_{\calA^{**}}$.

Let $\calA^*$ be Banach dual space of $C^*$-algebra $\calA$, $\calA^*_+\subset \calA$ be the closed cone of all positive functionals. The space $\calA^*$ can be naturally equipped with the structure of bimodule over $\calA$:
\begin{equation}
(a\cdot_l m) (b):=m(ab),\ a,b\in \calA, m\in \calA^*,
\end{equation}
\begin{equation}
(m\cdot_r c) (b):=m(bc),\ c,b\in \calA, m\in \calA^*.
\end{equation}
Let $\sigma: \calA^* \to \calA^*$, $\sigma(m)(a):=m(a^*)$ for any $a\in \calA$, $m\in \calA^*$. It is easy to note, that
$\sigma (amb)=b^*\sigma(m) a^*$.
\begin{remark}
	When this does not lead to a confusion, we will omit symbols $\cdot_l$ and $\cdot_r$ in the notation of left and right bimodule multiplications.
\end{remark}

Fix some Dirichlet space $(\calN, \tau, \calE, D(\calE))$. Let $\calA$ be some regular $C^*$-subalgebra of $\calN$. Then one can define a Carr\'e du Champ form $\Gamma$ as follows. For $f,g\in D(\calE)$, $a\in \calA$:
$$
\Gamma(f,g)(a):=\langle \partial (f), \partial (g)  a \rangle_\calH,
$$
where $\partial: D(\calE)\to \calH$ is a gradient operator associated with Dirichlet form $\calE$.
Since $\|ah\|_\calH\leq \|a\|_\infty\|h\|_\calH$, a Carr\'e du Champ form takes values in $\calA^*$ (the Banach dual of $\calA$).

One can reformulate inner product on $\calH$ in the terms of $\Gamma$:
\begin{equation}
\langle a\otimes b, c\otimes d\rangle_\calH = 1_{\calA^{**}}(b^*\Gamma(a,c)d).
\end{equation}
\begin{equation}
\left\|\sum_i a_i\otimes b_i\right\|^2_{\calH} = \sum_i \sum_k 1_{\calA^{**}}(b_i^*\Gamma(a_i,a_k)b_k).
\end{equation}
and also the Dirichlet form in terms of $\Gamma$:
\begin{equation}
\calE(a,b)=1_{\calA^{**}}(\Gamma(a,b)).
\end{equation}
Here $1_{\calA^{**}}$ is the identity element of $W^*$-algebra $\calA^{**}$.

It is also possible to explicitly express $\Gamma$ via $\calE$:
\begin{equation}
	\Gamma_\calE[a](b):=\calE(a,ab^*)+\calE(ab^*,a)-\calE(b^*, a^*a),\ \forall a,b\in \calB:=\calA\cap D(\calE).
\end{equation}

See Lemma 9.1 of \cite{CS03} for the proof of the nontrivial part of the following statement.
\begin{proposition}
	\label{properties of CdC}
	A Carr\'e du Champ (CdC) form $\Gamma: L^2(\tau) \to \{\calA^*_+\cup {+\infty}\}$, $f \to \Gamma[f]$, defined above, has the following properties:
	\begin{itemize}
		\item the set $D(\Gamma):=\{f\in L^2: \Gamma[f]\in \calA^*_+\}$ is dense in $L^2$,
		\item $\calB:=D(\Gamma)\cap \calA$ is a $^*-$subalgebra of $\calA$,
		\item $D(\Gamma)$ is complete in the norm:
		$$
		\|a\|_\Gamma^2:=\|a\|_{L^2(\tau)}^2+1_{\calA^{**}}(\Gamma[a]),
		$$ 
		where $1_{\calA^{**}}$ is the identity element of $W^*$-algebra $\calA^{**}$,
		\item the associated $\calA^*$-valued form $\Gamma(f,g):=\frac{1}{2}((1+i)(\Gamma[f]+\Gamma[g])-\Gamma[f-g]-i\Gamma[f-ig])$, defined on $D(\Gamma)\times D(\Gamma)$, is Hermitian over $\C$: $\forall t,s\in \C, \forall a,b\in D(\Gamma)$
		\begin{eqnarray}
		\left.\begin{aligned}
		&\Gamma(ta, sb)=t^*s\Gamma(a,b),\\
		&\Gamma(a,b)=\sigma(\Gamma(b,a)),
		\end{aligned}\right.
		\end{eqnarray} and satisfies
		\item reality:
		$\Gamma(a,b)=\Gamma(a^*, b^*)$ for all $a,b\in \calB\subseteq \calA$,
		\item representability: for all $a,b,c\in \calB$
		\begin{equation}
		\Gamma(ab,c)-\Gamma(b,ac)=b^*\Gamma(a,c)-\Gamma(b,a^*)c,
		\end{equation}
		\item complete positivity: for any $n\in \N$, $\{a_i\}_{i=1}^n\subset \calA$, $\{b_i\}_{i=1}^n\subset \calA$
		\begin{equation}
		\sum_{j,k=1}^n b_j^*\Gamma(a_j, a_k)b_k\in \calA^*_+.
		\end{equation}
	\end{itemize}
\end{proposition}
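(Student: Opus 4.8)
The plan is to reduce every assertion to a structural fact about the gradient derivation $\partial$ and the tangent bimodule $\calH$ furnished by Theorem \ref{diff calculus from dirichlet}, treating positivity, density and completeness first, then the sesquilinear and symmetry identities, and finally the two genuinely noncommutative properties.

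First I would establish the key identity $D(\Gamma)=D(\calE)$. For $f\in D(\calE)$ the vector $\partial f\in\calH$ is defined, so $\Gamma[f](a)=\langle\partial f,(\partial f)a\rangle_\calH$ is a bounded functional on $\calA$; evaluating on $a^*a$ and using the right-module adjoint relation gives $\Gamma[f](a^*a)=\langle(\partial f)a,(\partial f)a\rangle_\calH=\|(\partial f)a\|_\calH^2\ge 0$, so $\Gamma[f]\in\calA^*_+$, whereas $f\notin D(\calE)$ forces the value $+\infty$. Hence $D(\Gamma)=D(\calE)$, which is dense in $L^2(\tau)$ by definition of a Dirichlet form, giving the first bullet. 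Pairing with the unit of the bidual recovers $1_{\calA^{**}}(\Gamma[f])=\calE[f]$ (in the non-unital case via $\|\Gamma[f]\|=\sup_a\Gamma[f](a)/\|a\|=\calE[f]$ for a positive functional), so the norm $\|\cdot\|_\Gamma$ coincides with the Dirichlet norm $\|\cdot\|_\calE$; completeness of $D(\Gamma)$ is then exactly completeness of the Dirichlet space. The $*$-algebra property of $\calB=D(\Gamma)\cap\calA=D(\calE)\cap\calA$ is precisely the proposition cited above as Proposition 3.4 of \cite{DL92} and Proposition 2.2 of \cite{CS03}.

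Next I would verify sesquilinearity and symmetry of the associated form. Since $\partial$ is linear and the $\calH$-inner product is sesquilinear, a direct expansion of the polarization formula reproduces $\Gamma(f,g)(a)=\langle\partial f,(\partial g)a\rangle_\calH$, whence $\Gamma(ta,sb)=t^*s\,\Gamma(a,b)$ (antilinear in the first slot, linear in the second). The relation $\Gamma(a,b)=\sigma(\Gamma(b,a))$ follows by combining conjugate-symmetry of $\langle\cdot,\cdot\rangle_\calH$ with the right-module adjoint identity $\langle hc,g\rangle_\calH=\langle h,gc^*\rangle_\calH$: applying these to $\Gamma(b,a)(x^*)=\langle\partial b,(\partial a)x^*\rangle_\calH$ returns $\Gamma(a,b)(x)$. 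For reality I would use the symmetric-derivation identity $\partial(a^*)=J(\partial a)$ together with the intertwining relation $J(chd)=d^*J(h)c^*$ and the fact that $J$ is an antilinear isometric involution: rewriting $\Gamma(a^*,b^*)(x)=\langle J\partial a,(J\partial b)x\rangle_\calH$ and pushing $J$ through the module action collapses it to $\Gamma(a,b)(x)$.

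The genuinely noncommutative content is representability and complete positivity, and this is where I expect the real work to lie; for the careful verification I would appeal to Lemma 9.1 of \cite{CS03}. Representability, $\Gamma(ab,c)-\Gamma(b,ac)=b^*\Gamma(a,c)-\Gamma(b,a^*)c$, is obtained by feeding the Leibniz rule $\partial(ab)=(\partial a)b+a(\partial b)$ into the definition of $\Gamma$ and matching the resulting four $\calH$-inner products against the two bimodule actions on $\calA^*$. Complete positivity is the assertion $\sum_{j,k}b_j^*\Gamma(a_j,a_k)b_k\in\calA^*_+$; the natural route is to recognize, via the reformulation $\|\sum_i a_i\otimes b_i\|^2_\calH=\sum_{i,k}1_{\calA^{**}}(b_i^*\Gamma(a_i,a_k)b_k)$ recorded above, that evaluating this functional on a positive element $c^*c$ yields a squared $\calH$-norm of the form $\|\sum_k(\partial a_k)(\,\cdot\,)\|^2_\calH\ge 0$, so positivity is inherited from positivity of the bimodule inner product, and density of $\calB$ in $\calA$ extends the inequality to all of $\calA$. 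The main obstacle throughout is bookkeeping: keeping the left/right module actions, the involution $J$, and the sesquilinearity convention consistent so that the orderings in the representability and positivity identities come out exactly as stated — which is why these two items are delegated to \cite{CS03}.
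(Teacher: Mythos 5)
Your proposal is correct and takes essentially the same route as the paper: the paper gives no proof beyond citing Lemma 9.1 of \cite{CS03} for the nontrivial part, and you likewise delegate representability and complete positivity (the genuinely noncommutative items) to that lemma while deriving the routine items --- density, completeness, sesquilinearity, the identifications $D(\Gamma)=D(\calE)$ and $1_{\calA^{**}}(\Gamma[f])=\calE[f]$ --- from the structure of the tangent bimodule $(\calH,J,\partial)$, exactly as the surrounding text of the paper intends. Your sketch is in fact more detailed than the paper's own treatment.
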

%CdC $\Gamma$ is called regular iff $\calB$ is dense in $\calA$ and in $(D(\Gamma), ||\cdot||_\Gamma)$.

%Given a ``reference trace" $\tau$ defined on $C^*$-algebra $\calA$, one can identify $L^1(\tau)$ with a subspace of $\calA^*$ by the formula:
%$$
%f(a):=\tau(f \cdot a),\ \forall f\in L^1(\tau),\ a\in \calA\subseteq L^\infty(\tau),
%$$
%In the commutative case this identification can be reduced to the statement of the Radon-Nikodym theorem.

It is easy to check that an element $f\in \calA^*$ belongs to the subspace $L^1(\tau)\subset \calA^*$ iff it is a continuous functional w.r.t. the topology on $\calA$ induced by the inclusion $\calA\subseteq (L^\infty(\tau), weak^*)$.

In \cite{Zeng14} a Markov semigroup on $L^2(\tau)$ is called a \textit{noncommutative diffusion} semigroup iff the image of the associated form $\Gamma$ is a subset of $L^1(\tau)$ (see Subsection 2.4 of \cite{Zeng14} for details).
In this case it is possible to define $\Gamma(a)$ using the generator $\Delta_{(2)}$ of a noncommutative diffusion semigroup:
$$
2\Gamma[a]:= L(a^*)\cdot a + a^*\cdot L(a) - L(a^*a),\; a\in D(\Delta_{(2)}).
$$ 
It is clear that in this case $\tau(\Gamma[a])=\calE[a]$ for any $a\in D(\calE)$.

Under the assumption that the semigroup associated with a Dirichlet form is a noncommutative diffusion, one can formulate $BE(K, \infty)$ condition for some $K>0$:
\begin{equation}
\label{BE condition}
\Gamma(P_t(a))\leq e^{-2K t} P_t(\Gamma(a)),\ \forall a\in L^1(\tau)\cap D(\calE)\cap \calA,\ \forall t\geq 0,
\end{equation}
It can be checked, that both the right and the left hand sides of this inequality are well-defined.

If we do not assume that the form $\Gamma$ has its values in $L^1(\tau)$, we may face some problems with the right-hand side of (\ref{BE condition}), which can be resolved under additional assumptions. In particular, one need to extend the operator $P_t$ to $\calA^*$ ($\forall\ t\geq 0$). The extension can be defined via duality: $P_t(z)(a):=z(P_t(a)),\ a\in \calA,\ z\in \calA^*$ in the case $P_t$ has the following Feller-type property: $a\in \calA \implies P_t(a)\in \calA$. This property of semigroups on $C^*$-algebras is studied in details in \cite{Sav99}.

\section{Riemannian metric}
It is possible to construct an $\calA^*$-valued sesquilinear mapping on $\calH$ that plays a role of Riemannian metric tensor in noncommutative geometry. In a finite-dimensional situation this idea was explored in details in the paper \cite{Rieffel} of Marc A. Rieffel.

For simple tensors $a\otimes b$, $c\otimes d$ in $\calH$ set
$$
\hat{R}(a\otimes b, c\otimes d):=b^*\Gamma(a,c)d.
$$
The following theorem is a noncommutative generalization of Lemma 2.1 from \cite{Hinz13}.
\begin{theorem}
	$\hat{R}$ can be uniquely extended to a well-defined $\C$-sesquilinear mapping
	$$
	R: \calH \times \calH \to \calA^*
	$$
	such that for any $h \in \calH$, $R[h]\in \calA^*_+$. For any $h,g\in \calH$ one has $1_{\calA^{**}}(R(h,g))=\langle g, h \rangle_\calH$.
\end{theorem}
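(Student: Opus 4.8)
The plan is to define $R$ first on the algebraic level -- on finite sums of simple tensors from $\calB\otimes\calB$ -- by extending $\hat R$ sesquilinearly, and then to show that it descends to the quotient $\calB\otimes\calB/\ker\|\cdot\|_\calH$ and extends continuously to all of $\calH$. Concretely, for $h=\sum_i a_i\otimes b_i$ and $g=\sum_j c_j\otimes d_j$ I set $\hat R(h,g):=\sum_{i,j} b_i^*\,\Gamma(a_i,c_j)\,d_j\in\calA^*$. Two of the asserted properties are then immediate on this dense subspace. First, pairing with $1_{\calA^{**}}$ and using the identity $\langle a\otimes b,c\otimes d\rangle_\calH=1_{\calA^{**}}(b^*\Gamma(a,c)d)$ gives $1_{\calA^{**}}(\hat R(h,g))=\langle h,g\rangle_\calH$, which is the asserted evaluation formula. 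Second, on the diagonal $\hat R[h]=\hat R(h,h)=\sum_{i,j}b_i^*\Gamma(a_i,a_j)b_j$ lies in $\calA^*_+$ by the complete positivity of $\Gamma$ recorded in Proposition \ref{properties of CdC}.

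The heart of the matter is the norm bound $\|\hat R(h,g)\|_{\calA^*}\le\|h\|_\calH\,\|g\|_\calH$, which is what drives both the descent to the quotient and the continuous extension. The key observation is that for $x\in\calA$ the value $\hat R(h,g)(x)$ can be rewritten as a genuine inner product in $\calH$: from $\hat R(a\otimes b,c\otimes d)(x)=\Gamma(a,c)(b^*xd)=\langle a\otimes b,\,c\otimes(xd)\rangle_\calH$ one obtains, for general $g=\sum_j c_j\otimes d_j$,
\[
\hat R(h,g)(x)=\langle h,\,g^{(x)}\rangle_\calH,\qquad g^{(x)}:=\sum_j c_j\otimes(xd_j).
\]
I would then compute $\|g^{(x)}\|_\calH^2=\sum_{j,k}\Gamma(c_j,c_k)(d_j^*x^*x\,d_k)$ and recognize the right-hand side as $\hat R[g]$ evaluated at the positive element $x^*x$, yielding the crucial identity $\|g^{(x)}\|_\calH^2=\hat R[g](x^*x)$. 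Since $\hat R[g]\in\calA^*_+$ and, by the diagonal case above, $\|\hat R[g]\|_{\calA^*}=1_{\calA^{**}}(\hat R[g])=\|g\|_\calH^2$, positivity gives $\hat R[g](x^*x)\le\|\hat R[g]\|_{\calA^*}\,\|x^*x\|=\|g\|_\calH^2\,\|x\|^2$. Hence $\|g^{(x)}\|_\calH\le\|g\|_\calH\|x\|$, and Cauchy--Schwarz in $\calH$ gives $|\hat R(h,g)(x)|\le\|h\|_\calH\|g\|_\calH\|x\|$; taking the supremum over $\|x\|\le1$ proves the bound.

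With the bound in hand the extension is routine. The estimate shows $\hat R$ vanishes whenever either argument lies in $\ker\|\cdot\|_\calH$, so it descends to the algebraic quotient, which is dense in $\calH$; boundedness together with completeness of $\calA^*$ then yields a unique jointly continuous sesquilinear extension $R:\calH\times\calH\to\calA^*$. The two remaining properties pass to the limit: the cone $\calA^*_+$ is norm-closed, so $R[h]=\lim_n\hat R[h_n]\in\calA^*_+$ for any approximating sequence $h_n$ from the algebraic part; and since $1_{\calA^{**}}$ is a bounded functional on $\calA^*$, the identity $1_{\calA^{**}}(R(h,g))=\langle h,g\rangle_\calH$ extends by continuity from the dense subspace.

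I expect the main obstacle to be exactly the norm estimate, and specifically the recognition that evaluation of $\hat R(h,g)$ at a general $x$ reduces, through $\|g^{(x)}\|_\calH^2=\hat R[g](x^*x)$, to the positivity of $\hat R[g]$ combined with complete positivity of $\Gamma$; without this reduction one is tempted to seek a bounded ``insertion'' operator $g\mapsto g^{(x)}$ on $\calH$, which does not exist by naive module estimates. I note that the computation naturally produces $1_{\calA^{**}}(R(h,g))=\langle h,g\rangle_\calH$, which matches the stated $\langle g,h\rangle_\calH$ up to the placement of the conjugate-linear slot in the inner product convention.
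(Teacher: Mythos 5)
Your proof is correct and follows essentially the same route as the paper's: both arguments recognize the evaluation of the candidate functional at a positive element $x^*x$ (in the paper, at $f=\sqrt{f}^{\,*}\sqrt{f}$ with $f\in\calN_+$) as the squared $\calH$-norm of the ``inserted'' element $\sum_j c_j\otimes(x d_j)$, deduce positivity and the bound $\|\hat R[g]\|_{\calA^*}=1_{\calA^{**}}(\hat R[g])=\|g\|_\calH^2$, and then extend by continuity. If anything your writeup is the more complete of the two --- the paper only treats the diagonal $R[h]$ and leaves the polarization, the norm estimate, and the continuity of $h\mapsto h^{(\sqrt f)}$ implicit --- with the single caveat that for general $x\in\calA$ the element $x d_j$ need not lie in $\calB=\calA\cap D(\calE)$, so the identity $\hat R(h,g)(x)=\langle h,g^{(x)}\rangle_\calH$ should first be established for $x$ in the dense subalgebra $\calB$ and then extended; your remark about $\langle h,g\rangle_\calH$ versus the stated $\langle g,h\rangle_\calH$ correctly identifies a convention slip in the theorem as printed.
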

\begin{proof}
	For any finite linear combination $\sum_i a_i\otimes b_i \in \calB \otimes \calB$ and any $f\in \calN_+$,
\begin{multline*}
\sum_i \sum_k (b_i^*\Gamma(a_i, a_k)b_k) (f)=\sum_i \sum_k \Gamma(a_i, a_k)) (b_i^* f b_k)
=\sum_i \sum_k \Gamma(a_i, a_k)) (b_i^* \sqrt{f}^* \sqrt{f} b_k)=\\
=\sum_i \sum_k 1_{\calA^{**}}((\sqrt{f} b_i)^*\Gamma(a_i, a_k)) \sqrt{f} b_k)=\left\|\sum_i a_i \otimes (\sqrt{f} b_i) \right\|_{\calH}^2\geq 0.
\end{multline*}
Hence $\sum_i \sum_k (b_i^*\Gamma(a_i, a_k)b_k)\in \calA^*_+$ and
$
\|\sum_i a_i\otimes b_i\|=0 \implies \sum_i \sum_k (b_i^*\Gamma(a_i, a_k)b_k)=0
$
as a linear functional. Therefore
$$
R\left[\sum_i a_i\otimes b_i\right]:=\sum_i \sum_k b_i b_k \Gamma(a_i, a_k)
$$
is a well-defined element of $\calA^*_+$. It coincides with $\hat{R}$ on simple tensors.

Let $h\in \calH$ and $(h_k)_{k=1}^\infty \subset \calH$ be a sequence of finite linear combinations of simple tensors 
$h_k=\sum_{i=1}^{n_k} f_i^{(k)}\otimes g_i^{(k)}\in \calH$ approximating $h$ in $\|\cdot\|_\calH$. For any $f\in \calN_+$
\begin{multline*}
\lim_k R(h_k)(f)=\lim_k \sum_i \sum_j ((b_i^{(k)})^*\Gamma(a_i^{(k)}, a_j^{(k)})b_j^{(k)}) (f)=\\
=\lim_k \sum_i \sum_j \Gamma(a_i^{(k)}, a_j^{(k)})) ((b_i^{(k)})^* \sqrt{f}^* \sqrt{f} b_j^{(k)})=\\
=\lim_k \sum_i \sum_j 1_{\calA^{**}}((\sqrt{f}b_i^{(k)})^*\Gamma(a_i^{(k)}, a_j^{(k)})\sqrt{f} b_j^{(k)})
=\lim_k\|h_k\sqrt{f}\|^2_\calH=\|h\sqrt{f}\|^2.
\end{multline*}

Set $R[h](f):=\lim_k R[h_k](f)$ for any $f\in \calN_+$. Since one can decompose any element $f\in \calN$ as a linear combination of positive elements: $f=f_1-f_2+if_3-if_4$, where
$$
f_1:=\left|\frac{x+x^*}{2}\right|,\; f_2:=f_1-\frac{x+x^*}{2},\; f_3:=\left|\frac{x-x^*}{2i}\right|,\; f_4:=f_3-\frac{x-x^*}{2i},\
$$
$R[h](f)$ can be defined on $\calN$ as follows:
\begin{equation}
\label {eq Rhf}
R[h](f):=\lim_k R[h_k](f_1)-\lim_k R[h_k](f_2)+i\lim_k R[h_k](f_3)-i\lim_k R[h_k](f_4).
\end{equation}
It is well-defined continuous linear functional, since
$$
R[h](f)=\|h\sqrt{f_1}\|^2-\|h\sqrt{f_2}\|^2+i\|h\sqrt{f_3}\|^2-i\|h\sqrt{f_4}\|^2,
$$
$$
\left|R(h)(f)\right|\leq 4 \|f\|_\infty \|h \|_\calH 
$$
\end{proof}

It can be noted, that the map $R: \calH\times \calH \to \calA^*$ constructed above satisfies the following properties:
\begin{itemize}
	\item sesquilinear over $\C$: $\forall t,s\in \C, \forall h,g\in \calH$, $R(ta, sb)=t^*sR(a,b)$,
	\item $R(h,g a)=R(h,g)\cdot_r a$
	\item $\sigma(R(h,g))=R(g,h)$,
	\item $R(a h,g)=R(h,a^*g)$,
	\item $\{h\in\calH: R(h,g)=0\ \forall g\in\calH \}=0$.
	\item positivity:
	$R(h,h)\in \calA^*_+$,
\end{itemize}
for all $h,g\in \calH$, $a\in \calA$.
It is a consequence of (\ref{properties of CdC}) and the very definition of $R(\cdot, \cdot)$. See also Section 3 of \cite{Rieffel} and Corollary 2.1 in \cite{Hinz13}.

%\begin{definition}[Carr\'e du Champ forms (CdC)]
%	By a (right) Carr\'e du Champ (CdC) we mean a map $\Gamma: L^2 \to \calA^*_+\cup {+\infty}$,
%	$f \to \Gamma[f]$ that satisfies the conditions:
%	\begin{itemize}
%		\item the set $D(\Gamma):=\{f\in L^2: \Gamma[f]\in \calA^*_+\}$ is dense in $L^2$,
%		\item $\calB:=D(\Gamma)\cap \calA$ is a $^*-$subalgebra of $\calA$,
%		\item $D(\Gamma)$ is complete in the norm:
%		$$
%		||a||_\Gamma:=(||a||_L^2+1_{\calA^{**}}(\Gamma[a]))^{\frac{1}{2}},
%		$$ 
%		where $1_{\calA^{**}}$ is the identity element of $W^*$-algebra $\calA^{**}$,
%		\item the associated $\calA^*$-valued form $\Gamma(f,g):=\Gamma[f+g]-\Gamma[f]-\Gamma[g]$, defined on $D(\Gamma)\times D(\Gamma)$, is Hermitian over $\C$: $\forall t,s\in \C, \forall a,b\in D(\Gamma)$
%		\begin{eqnarray}
%		\left.\begin{aligned}
%		&\Gamma(ta, sb)=t^*s\Gamma(a,b),\\
%		&\Gamma(a,b)=\sigma(\Gamma(b,a)),
%		\end{aligned}\right.
%		\end{eqnarray} and satisfies
%		\item reality:
%		$\Gamma(a,b)=\Gamma(a^*, b^*)$ for all $a,b\in \calB\subseteq \calA$,
%		\item representability: for all $a,b,c\in \calB$
%		\begin{equation}
%		\Gamma(ab,c)-\Gamma(b,ac)=b^*\Gamma(a,c)-\Gamma(b,a^*)c,
%		\end{equation}
%		\item complete positivity: for any $n\in \N$, $\{a_i\}_{i=1}^n\subset \calA$, $\{b_i\}_{i=1}^n\subset \calA$
%		\begin{equation}
%		\sum_{j,k=1}^n b_j^*\Gamma(a_j, a_k)b_k\in \calA^*_+.
%		\end{equation}
%	\end{itemize}
%\end{definition}
%CdC $\Gamma$ is called regular iff $\calB$ is dense in $\calA$ and in $(D(\Gamma), ||\cdot||_\Gamma)$.

\section{Poincar\'e inequality}
In this section we provide a definition of Poincar\'e-like inequality, which differs from the usual one, but is appropriate for our following study of PDEs.

Let us define a kernel of a Dirichlet form as follows.
$$
\ker \calE:=\left\{u\in D(\calE): \calE[u]=0\right\}.
$$

Let $D(\calE)$ (which is a Hilbert space) be decomposed into orthogonal sum
$$
D(\calE)=\ker \calE \oplus D_\perp(\calE).
$$
It is clear that $D_\perp(\calE)$ is a Hilbert space w.r.t. inner product $\calE(\cdot, \cdot)$.

\begin{definition}
	$\calE$ is said to satisfy Poincar\'e inequality iff for any $a\in D_\perp(\calE)$, 
	\begin{equation}
	\label{Poincare inequality}
	\|a\|_{L^2}^2\leq C \calE[a],
	\end{equation}
	where $C>0$ is some constant.
	The Poincar\'e constant $C_P$ is defined as the infimum of all constants $C$ such that the inequality (\ref{Poincare inequality}) is satisfied.
\end{definition}
Assume that $\dim(\ker \calE)\leq 1$, i.e. it is either trivial or one-dimensional. Then the dimension of $\ker \Delta_{(2)}$ is also not greater than one. Is such a case the Poincar\'e constant can be interpreted as the inverse of the spectral gap $g$ of $\Delta_{(2)}$.
$$
\frac{1}{C_P}=g:=\{\lambda : \lambda>0,\ \lambda\in \sigma(\Delta_{(2)}) \}>0.
$$

It should be noted, that in some classic definitions of Poincar\'e inequality it is required for $\ker \calE$ to be one-dimensional. We do not assume that and we show that our formulation of the inequality is enough to establish existence/uniqueness results for many PDEs (see the next two sections).

Let us consider a toy noncommutative example.
\begin{example}[noncommutative 2-torus]
	A noncommutative 2-torus can be defined in different equivalent ways. One of them is to define it as a universal $C^*$-algebra generated by two unitaries $U$ and $V$ satisfying the relation $VU=e^{2i\pi\theta}UV$ for some fixed parameter $\theta\in \R$. The unique tracial state $\tau$ is defined by the equality:
	$$
	\tau \left(\sum_{n,m\in\Z} \alpha_{n,m} U^n V^m\right):= \alpha_{0,0},\; n,m\in\Z,\ \alpha_{n,m}\in \C.
	$$
	If one equip noncommutative torus with the Markov semigroup of the form:
	$$
	P_t(U^n V^m)=e^{-t(n^2+m^2)}U^n V^m,\; n,m\in\Z,
	$$
	the associated $C^*$-Dirichlet form is the closure of the form defined by
	$$
	\calE\left[\sum_{n,m\in\Z} \alpha_{n,m} U^n V^m\right]=\sum_{n,m\in\Z} (n^2+m^2)|\alpha_{n,m}|^2
	$$
	on the elements with finite number of non-zero components $\alpha_{n,m}$.
	
	It is clear, that the Poincar\'e inequality is satisfied in this case, and $C_P=1$:
	\begin{equation*}
	\left\|\sum_{n,m\in\Z} \alpha_{n,m} U^n V^m\right\|_{L^2(\tau)}=\sum_{n,m\in\Z} |\alpha_{n,m}|^2\leq \sum_{n,m\in\Z} (n^2+m^2)|\alpha_{n,m}|^2=\calE\left[\sum_{n,m\in\Z} \alpha_{n,m} U^n V^m\right].
	\end{equation*}
	The $\ker \calE$ is one-dimensional and consists of constants. Hence the spectral gap of the corresponding operator $\Delta_{(2)}$ is equal to one.
\end{example}

For the alternative definitions of Poincar\'e inequalities in noncommutative setting see the series of papers of M. Junge and Q. Zeng (\cite{JunZeng13}, \cite{JunZeng13-2}, \cite{Zeng13}, \cite{Zeng14}, e.t.c.), where they study relations between noncommutative analogues of various consentration-like inequalities. For the short review of their results see Subsection 1.2.2 of \cite{Zeng14}.

\section{Quasilinear elliptic equations}
Assume that a Dirichlet space $(\calN, \tau, \calE, D(\calE))$ and a regular $C^*$-algebra $\calA$ are chosen and fixed throughout the section. We also assume $D(\calE)$ to be a separable space.
Let us first prove the existence of a weak solution for a linear Poisson equation:
\begin{equation}
\label{Poisson equation}
\partial^* \partial u = f,\; f\in D(\calE),
\end{equation}
\begin{definition}
	An element $u\in D(\calE)$ is called a weak solution for (\ref{Poisson equation}) iff
	$$
	\langle \partial u, \partial g \rangle_\calH = \langle f, g \rangle_{L^2(\tau)}
	$$
	for any $g\in D(\calE)$.
\end{definition}
\begin{theorem}[Dirichlet principle]
	Let $\calE$ satisfies Poincar\'e inequality (\ref{Poincare inequality}). Then (\ref{Poisson equation}) has a weak solution.
\end{theorem}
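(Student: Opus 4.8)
The plan is to recast the weak formulation as a Riesz representation problem on the Hilbert space $(D_\perp(\calE),\calE(\cdot,\cdot))$. Using the identity $\calE(a,b)=\langle \partial a,\partial b\rangle_\calH$ from Theorem~\ref{diff calculus from dirichlet}, the defining condition $\langle \partial u,\partial g\rangle_\calH=\langle f,g\rangle_{L^2(\tau)}$ reads simply $\calE(u,g)=\langle f,g\rangle_{L^2(\tau)}$ for all $g\in D(\calE)$. Since any $w\in\ker\calE$ satisfies $\|\partial w\|_\calH^2=\calE[w]=0$, the derivation $\partial$ annihilates $\ker\calE$; hence both the left side of the weak equation and the sought solution are insensitive to adding kernel elements, and it suffices to look for $u\in D_\perp(\calE)$.

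First I would upgrade the seminorm $\calE[\cdot]^{1/2}$ to a genuine Hilbert norm on $D_\perp(\calE)$. The Poincar\'e inequality (\ref{Poincare inequality}) gives $\|a\|_{L^2(\tau)}^2\le C\,\calE[a]$ for $a\in D_\perp(\calE)$, so that $\calE[a]\le\|a\|_\calE^2=\|a\|_{L^2(\tau)}^2+\calE[a]\le (C+1)\calE[a]$; thus the $\calE$-norm and the graph norm $\|\cdot\|_\calE$ are equivalent on $D_\perp(\calE)$. Because $D_\perp(\calE)$ is by construction a closed subspace of the Hilbert space $(D(\calE),\langle\cdot,\cdot\rangle_\calE)$, it is complete in $\|\cdot\|_\calE$ and therefore also in the equivalent norm $\calE[\cdot]^{1/2}$, so $(D_\perp(\calE),\calE(\cdot,\cdot))$ is a Hilbert space. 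Next I would verify that $g\mapsto\langle f,g\rangle_{L^2(\tau)}$ is a bounded linear functional on it: Cauchy--Schwarz in $L^2(\tau)$ followed by Poincar\'e yields $|\langle f,g\rangle_{L^2(\tau)}|\le\|f\|_{L^2(\tau)}\|g\|_{L^2(\tau)}\le\sqrt{C}\,\|f\|_{L^2(\tau)}\,\calE[g]^{1/2}$, which is exactly boundedness in the $\calE$-norm. The Riesz representation theorem then produces a unique $u\in D_\perp(\calE)$ with $\calE(u,g)=\langle f,g\rangle_{L^2(\tau)}$ for every $g\in D_\perp(\calE)$.

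It remains to promote this identity from $D_\perp(\calE)$ to all of $D(\calE)$, and this is the step demanding the most care. Writing an arbitrary $g\in D(\calE)$ as $g=g_0+g_\perp$ with $g_0\in\ker\calE$ and $g_\perp\in D_\perp(\calE)$, the Cauchy--Schwarz inequality for the positive semidefinite form gives $|\calE(u,g_0)|^2\le\calE[u]\,\calE[g_0]=0$, while $\partial g_0=0$ kills the kernel contribution on the left of the weak equation; hence $\langle\partial u,\partial g\rangle_\calH=\calE(u,g_\perp)=\langle f,g_\perp\rangle_{L^2(\tau)}$. Matching this against $\langle f,g\rangle_{L^2(\tau)}$ forces $\langle f,g_0\rangle_{L^2(\tau)}=0$ for every $g_0\in\ker\calE$, i.e. $f\perp\ker\calE$ in $L^2(\tau)$ --- the natural solvability (compatibility) condition, which is equivalent to $f$ lying in the range of $\Delta_{(2)}=\partial^*\partial$. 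The main obstacle is therefore not the functional-analytic machinery, which is a clean Lax--Milgram/Riesz argument once Poincar\'e supplies coercivity and completeness, but correctly handling $\ker\calE$: one either restricts the test space to $D_\perp(\calE)$ or invokes $f\perp\ker\calE$ so that the weak identity holds against every $g\in D(\calE)$. Uniqueness of the solution then holds modulo $\ker\calE$, and is genuine within $D_\perp(\calE)$.
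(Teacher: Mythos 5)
Your argument is correct but takes a genuinely different route from the paper's. The paper runs the direct method of the calculus of variations: it minimizes the energy functional $I(u)=\tfrac12\|\partial u\|_\calH^2-\Re\langle f,u \rangle_{L^2(\tau)}$ over $D(\calE)$, uses the Poincar\'e inequality to get coercivity and a bounded minimizing sequence, extracts a weakly convergent subsequence, invokes convexity/weak lower semicontinuity to identify the limit as a minimizer, and then reads off the weak equation as the Euler--Lagrange condition (finishing with the observation that $\Re(\alpha z)=0$ for all $\alpha\in\C$ forces $z=0$). Your Riesz-representation argument on $\left(D_\perp(\calE),\calE(\cdot,\cdot)\right)$ reaches the same conclusion more directly: Poincar\'e is used only to establish norm equivalence, hence completeness, and boundedness of the functional $g\mapsto\langle f,g\rangle_{L^2(\tau)}$, and you obtain existence \emph{and} uniqueness in $D_\perp(\calE)$ in one stroke, with no minimizing sequences, no weak compactness, and no need for the separability of $D(\calE)$ that the paper assumes in this section. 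More importantly, you make explicit a point the paper glosses over: if $\ker\calE\neq 0$ and $f$ has a nonzero $L^2$-component in $\ker\calE$, then testing against $g_0\in\ker\calE$ gives $0=\langle\partial u,\partial g_0\rangle_\calH\neq\langle f,g_0\rangle_{L^2(\tau)}$, so no weak solution in the paper's sense can exist; correspondingly, the paper's claim that $I$ has a finite infimum then fails (take $u=tg_0$ with $t\to\infty$), because Poincar\'e controls $\|u\|_{L^2(\tau)}$ by $\|\partial u\|_\calH$ only on $D_\perp(\calE)$. The compatibility condition $f\perp\ker\calE$ (or a restriction of the test space to $D_\perp(\calE)$) is thus genuinely needed by both arguments; yours is the one that says so.
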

\begin{proof}
	We are going to use a classic variational direct method, based on Poincar\'e inequality.
	Let $I: D(\calE) \to \R$ be defined as $I(u):=\frac{1}{2}\|\partial u\|_\calH^2-\Re\langle f,u \rangle_{L^2(\tau)}$.
	Note that for any $\delta>0$
	$$
	I(u)\geq \frac{1}{2}\left(\|\partial u\|_\calH^2-\delta\|u\|_{L^2(\tau)}^2-\frac{1}{4\delta}\|f\|_{L^2(\tau)}^2\right).
	$$
	Hence it follows from Poincar\'e inequality that $I(u)$ has a finite infimum $I_*$.
	
	Let $(u_n)_{n=1}^\infty \subset D(\calE)$ be such that $I(u_n)\searrow I_*$. We need to show that there is a subsequence converging to a limit in $D(\calE)$.
	
	By H\"older and Poincar\'e inequalities,
	\begin{equation}
	I(u)\geq \frac{1}{2}\|\partial u \|^2_\calH - \|u\|_{L^2(\tau)}\|f\|_{L^2(\tau)}\geq\frac{1}{2}\|\partial u \|^2_\calH - C\|\partial u \|_\calH= \left(\frac{1}{2}\|\partial u \|_\calH-C\right)\|\partial u \|^2_\calH.
	\end{equation}
	Hence, the uniform bound on $I(u_n)$ implies uniform boundedness of  $\|\partial u \|_\calH$, and, hence, uniform boundedness of $(u_n)$ in $D(\calE)$. Thus $(u_n)$ has a weakly convergent subsequence, denoted in the same way by $(u_n)$. Denote its limit by $u_\infty$.
	
	Since $I(u)$ is a convex functional,
	$$
	I(u_\infty)\leq \liminf_n I(u_n)=I_*.
	$$
	Thus $u_\infty$ is a minimizer.
	
	Let $\omega$ be some element of $D(\calE)$. Consider a real function $\alpha \mapsto I(u_\infty +\alpha \omega)$. Since
	$I(u_\infty +\alpha \omega)= \alpha^2\frac{1}{2}\calE[\omega] +\alpha\Re\left(\calE(u_\infty,\omega)-\langle f, \omega\rangle_{L^2(\tau)}\right)+I[u_\infty]$, the value of 
	$$
	\alpha^2\frac{1}{2}\calE[\omega] +\alpha\Re\left(\calE(u_\infty,\omega)-\langle f, \omega\rangle_{L^2(\tau)}\right)
	$$ 
	should be nonnegative for any $\alpha\in \R$. Hence $u_\infty$ satisfies 
	$\Re\langle \partial u_\infty, \partial g \rangle_\calH = \Re\langle f, g \rangle_{L^2(\tau)}$,
	and, since $\Re(\alpha z)=0,\; \forall \alpha\in \C$ implies $z=0$, it is a solution of (\ref{Poisson equation}).
\end{proof}

Let us define quasilinear elliptic equation as an equation of the form:
\begin{equation}
\label{quasilinear elliptic equation}
\partial^*F(\partial u)= f,\; f\in D(\calE),
\end{equation}
where $F:\calH \to \calH$ is some (possibly nonlinear) map.

\begin{definition}
	An element $u\in D(\calE)$ is called a weak solution for (\ref{quasilinear elliptic equation}) iff
	$$
	\langle F(\partial u), \partial g \rangle_\calH = \langle f, g \rangle_{L^2(\tau)}
	$$
	for any $g\in D(\calE)$.
\end{definition}

The following theorem is an analogue of a classic Browder-Minty result.
\begin{theorem}
Assume that $F:\calH \to \calH$ satisfies the following properties:
\begin{enumerate}
	\item $\Re\langle F(h) - F(v), h-v \rangle_\calH\geq 0$, for any $h,v\in \calH$,
	\item $\|F(h)\|_\calH\leq c_0 (1+\|h\|_\calH)$, for any $h\in \calH$,
	\item $\Re\langle F(h), h \rangle_\calH\geq c_1\|h\|_\calH-c_2$, for any $h\in \calH$,
\end{enumerate}
$c_0, c_1>0$, $c_2 \geq 0$ and $\calE$ satisfies Poincar\'e inequality (\ref{Poincare inequality}). Then 
the equation (\ref{quasilinear elliptic equation}) has a weak solution.
\end{theorem}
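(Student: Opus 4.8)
The plan is to recast the weak formulation as an abstract operator equation on a Hilbert space and solve it by the Browder--Minty monotonicity method, realized through a Galerkin scheme, in direct parallel with the linear Dirichlet principle above. First I would pass to the energy space $V:=D_\perp(\calE)$, viewed as a \emph{real} Hilbert space under the inner product $\langle u,v\rangle_V:=\Re\langle \partial u,\partial v\rangle_\calH=\Re\,\calE(u,v)$. By the Poincar\'e inequality this is a genuine norm, $V$ is complete, and it is separable (since $D(\calE)$ is assumed separable), hence reflexive. Because $\partial$ annihilates $\ker\calE$, the restriction $\partial|_V\colon V\to\calH$ is an isometry onto a closed subspace, so testing against $g\in V$ captures the whole problem modulo $\ker\calE$; as in the linear case the compatibility $f\perp_{L^2(\tau)}\ker\calE$ is implicit, and for $g\in\ker\calE$ both sides of the weak identity vanish.

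Next I would define $A\colon V\to V$ via the Riesz representation theorem by $\langle Au,v\rangle_V:=\Re\langle F(\partial u),\partial v\rangle_\calH$. Hypothesis (2) makes $v\mapsto\Re\langle F(\partial u),\partial v\rangle_\calH$ a bounded functional, so $Au$ is well defined and $A$ maps bounded sets to bounded sets; the Poincar\'e inequality makes $v\mapsto\Re\langle f,v\rangle_{L^2(\tau)}$ bounded on $V$, represented by some $w\in V$. The problem becomes $Au=w$. The three ingredients of Browder--Minty are then read off the hypotheses: \emph{monotonicity} of $A$ is exactly (1) applied to $h=\partial u$ and $v=\partial v$; \emph{coercivity} comes from (3), which on testing gives $\langle Au,u\rangle_V=\Re\langle F(\partial u),\partial u\rangle_\calH\ge c_1\|u\|_V-c_2$, enough to dominate the term coming from $w$; \emph{boundedness} is (2).

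To produce a solution I would run a Galerkin approximation. Choose an increasing chain of finite-dimensional subspaces $V_1\subseteq V_2\subseteq\cdots$ with dense union (available by separability), let $P_n\colon V\to V_n$ be the orthogonal projections, and seek $u_n\in V_n$ solving $P_nAu_n=P_nw$, i.e. $\langle F(\partial u_n),\partial v\rangle_\calH=\langle f,v\rangle_{L^2(\tau)}$ for all $v\in V_n$. Each $u_n$ exists by the finite-dimensional ``acute angle'' lemma (a consequence of Brouwer's theorem): coercivity makes $\langle P_n(Au-w),u\rangle_V\ge 0$ on a sufficiently large sphere of $V_n$, forcing a zero inside. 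Coercivity also yields a uniform bound $\|u_n\|_V\le R$, and then (2) bounds $(Au_n)$; passing to a subsequence, $u_n\rightharpoonup u$ and $Au_n\rightharpoonup\chi$ weakly. Using the nested structure, for $v$ in the dense union one has $\langle Au_n,v\rangle_V=\langle w,v\rangle_V$ for $n$ large, whence $\chi=w$. Minty's trick then identifies the limit: from $0\le\langle Au_n-Av,u_n-v\rangle_V$ together with $\langle Au_n,u_n\rangle_V=\langle w,u_n\rangle_V\to\langle w,u\rangle_V$, letting $n\to\infty$ gives $\langle w-Av,u-v\rangle_V\ge 0$ for all $v\in V$; substituting $v=u-tz$ and letting $t\downarrow 0$ yields $\langle w-Au,z\rangle_V\ge 0$ for every $z$, so $Au=w$. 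Unwinding the definitions, and testing against both $v$ and $iv$ to recover imaginary parts, shows $u$ is a weak solution of \eqref{quasilinear elliptic equation}.

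The step I expect to be the \textbf{main obstacle} is the continuity input. The finite-dimensional solvability (Brouwer) requires continuity of the Galerkin map, and Minty's trick requires hemicontinuity of $A$, i.e. continuity of $t\mapsto\langle A(u-tz),z\rangle_V$. Hypotheses (1)--(3) supply monotonicity, growth and coercivity, but \emph{not}, in general, any continuity of $F$; a monotone, locally bounded, everywhere-defined operator on a Hilbert space need not be hemicontinuous. Thus the delicate point, which I would scrutinize most carefully, is to secure demicontinuity of $A$---either by reading an (implicit) continuity assumption into $F$, exactly as the classical Browder--Minty theorem does through an explicit hemicontinuity hypothesis, or by deriving it from additional structure of the concrete $F$ at hand.
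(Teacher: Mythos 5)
Your proposal follows essentially the same route as the paper: reduce modulo $\ker\calE$ (the paper uses the quotient $D(\calE)/\ker\calE$ rather than the orthogonal complement, an immaterial difference), solve finite-dimensional Galerkin projections via Brouwer's theorem, extract weak limits of $u_m$ and $F(\partial u_m)$ from the coercivity and growth bounds, and identify the limit by Minty's monotonicity trick, recovering the complex weak identity from the real one at the end. The obstacle you single out is genuine and is not resolved in the paper: hypotheses (1)--(3) contain no continuity assumption on $F$, yet the paper's proof invokes the Brouwer-based zero-of-a-vector-field lemma for the map $V(d)$ (which requires $V$, hence $F$, to be continuous) and passes to the limit $\lambda\to 0$ in $\Re\langle \xi - F(\partial u_\infty - \lambda\,\partial g), \partial g\rangle_\calH \geq 0$ (which requires hemicontinuity of $F$ along lines). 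So the theorem as stated is missing a hemicontinuity hypothesis, exactly as in the classical Browder--Minty theorem; your diagnosis of where the argument is incomplete is correct, and aside from making that assumption explicit your proof matches the paper's.
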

\begin{proof}	
	Let $D_0(\calE)$ be a quotient Hilbert space $D(\calE)/ \ker \calE$, where
	$$
	\ker \calE:=\left\{u\in D(\calE): \calE[u]=0\right\},
	$$
	equipped with the inner product $\calE(\cdot, \cdot)$. It is clear that the equation (\ref{quasilinear elliptic equation}) is well defined on $D_0(\calE)$, and if it has a solution $u\in D_0(\calE)$ then each representative $\tilde{u}\in D(\calE)$ of $u$ appears to be a solution of (\ref{quasilinear elliptic equation}) in $D(\calE)$.
	
	Consider an orthonormal basis $\{\omega_n\}$ of $D_0(\calE)$. We will look for a sequence of elements $(u_m)\subset D_0(\calE)$ that has the form:
	\begin{equation}
	\label{Galerkin approximation}
	u_m=\sum_{k=1}^m d^k_m \omega_k,
	\end{equation}
	such that
	\begin{equation}
	\label{Projection of the problem}
	\Re\langle F(\partial(u_m)), \partial(\omega_k) \rangle_\calH=\Re\langle f, \omega_k \rangle_{L^2(\tau)}.
	\end{equation}
	
	Define the function $V: \R^m \to \R^m$, $V=(v^1,\dots,v^m)$ as follows:
	$$
	v^k(d):=\Re\left\langle F\left(\sum_{j=1}^m d_j \partial\omega_j\right), \partial(\omega_k) \right\rangle_\calH-\Re\langle f, \omega_k \rangle_{L^2(\tau)},
	$$
	where $d=(d_1,\dots,d_m)\in \R^m$. Then
	\begin{align*}
	V(d)\cdot d &= \Re\left\langle \partial\left(\sum_{j=1}^m d_j \partial\omega_j\right), \left(\sum_{j=1}^m d_j \partial(\omega_j)\right) \right\rangle_\calH-\Re\left\langle f, \left(\sum_{j=1}^m d_j \omega_j\right) \right\rangle_{L^2(\tau)}\\
	&\geq c_1\left\|\sum_{j=1}^m d_j \partial\omega_j\right\|^2_\calH-c_2-\Re\left\langle f, \left(\sum_{j=1}^m d_j \omega_j\right) \right\rangle_{L^2(\tau)}\\
	&=c_1|d|^2-c_2-\sum_{j=1}^m d_j \Re\langle f, \omega_j\rangle_{L^2(\tau)}\\
	&\geq \frac{c_1}{2}|d|^2 - c_2 - C\sum_{j=1}^m \Re\langle f, \omega_j\rangle_{L^2(\tau)}^2.
	\end{align*}
	
	Let $u\in D_0(\calE)$ be a equivalence class of a weak solution for (\ref{Poisson equation}). Then
	$$
	\Re\langle \partial u, \partial \omega_j \rangle_\calH = \Re\langle f, \omega_j \rangle_{L^2(\tau)},\; \forall j\in \N,
	$$
	Hence
	$$
	\sum_{j=1}^m (\Re\langle f, \omega_j \rangle_{L^2(\tau)})^2=\Re\calE(u, \omega_j)^2\leq \|u\|_\calH^2\leq C\|f\|_{L^2(\tau)},
	$$
	and $V(d)\cdot d\geq \frac{c_1}{2}|d|^2-C$ for some $C\in \R$. Thus $V(d)\cdot d\geq 0$ if $|d|$ is large enough.
	
	One can use a standard application of Brouwer fixed-point theorem (see p.496 of \cite{Evans}) to conclude that $V(d)=0$ for some point $d\in \R^m$. This point provides us with a ``correct'' set of coefficients in (\ref{Galerkin approximation}) to solve (\ref{Projection of the problem}).
	
	Since
	$$
	\Re\langle F(\partial u_m), \partial u_m \rangle_\calH = \Re\langle f, u_m \rangle_{L^2(\tau)},\; \forall m\in \N,	
	$$
	$$
	c_1 \|\partial u_m\|^2 \leq C + \Re\langle f, u_m \rangle_{L^2(\tau)}\leq C+ \delta \|u_m \|^2_{L^2(\tau)}+\frac{1}{4\delta}\|f\|^2_{L^2(\tau)} 
	$$
	for any $\delta>0$, and choosing $\delta$ small enough, one deduces:
	$$
	\calE[u_m]\leq C(1+\|f\|_{L^2(\tau)}).
	$$
	
	Due to this uniform boundedness, one can extract a subsequence of $(u_m)\subset D_0(\calE)$ (denoted again by $(u_m)$) weakly convergent in $D_0(\calE)$ to some element $u_\infty$, and, due to Poincar\'e inequality, there is a sequence  $(\tilde{u}_k)\subset D(\calE)$ of representatives, $\tilde{u}_k\in u_k$, $\forall k\in N$, such that it converges in $D(\calE)$ to $\tilde{u}_\infty\in u_\infty$.
	
	Using the assumption (2) of the theorem one obtains that there is a subsequence $(u_m)$ such that $F(\partial u_m)\to \xi$ weakly in $\calH$,
	$$
	\Re\langle \xi, \partial \omega_m \rangle_\calH = \Re\langle f, \omega_m \rangle_{L^2(\tau)}.
	$$
	Thus
	$$
	\Re\langle \xi, \partial \omega \rangle_\calH = \Re\langle f, \omega \rangle_{L^2(\tau)},\; \forall \omega\in D_0(\calE).
	$$
	
	And, since by the assumption (1) of the theorem, $\Re\langle F(\partial u_m) - F(\partial \omega), \partial u_m-\partial \omega \rangle_\calH\geq 0$ for any $\omega\in D_0(\calE)$, one can conclude
	\begin{align*}
		\Re\langle f, u_m \rangle_{L^2(\tau)} - \Re\langle F(\partial u_m), \partial \omega \rangle_\calH-
		\Re\langle F(\partial \omega), \partial u_m -\partial \omega \rangle_\calH\geq 0,\\
		\Re\langle f, u_\infty \rangle_{L^2(\tau)} - \Re\langle \xi, \partial \omega \rangle_\calH-
		\Re\langle F(\partial \omega), \partial u_\infty -\partial \omega \rangle_\calH\geq 0.
	\end{align*}
	It follows that
	$$
	\Re\langle \xi- F(\partial \omega), \partial (u_\infty-\omega) \rangle_\calH\geq 0,\; \forall \omega\in D_0(\calE).	
	$$
	Fix any $g\in D_0(\calE)$ and set $\omega:=u_\infty-\lambda g$. One obtains
	\begin{align*}
	\Re\langle (\xi- F(\partial u_\infty - \lambda \partial g)), \partial g \rangle_\calH&\geq 0,\\
	\Re\langle (\xi- F(\partial u_\infty)), \partial g \rangle_\calH&\geq 0,\; \forall g\in D_0(\calE).			
	\end{align*}
	Replacing $g$ by $-g$, one can deduce, that the equality
	$
	\Re\langle (\xi- F(\partial u_\infty)), \partial g \rangle_\calH= 0,\; \forall g\in D_0(\calE)
	$
	holds. Since $
	\Re\langle \xi, \partial \omega \rangle_\calH = \Re\langle f, \omega \rangle_{L^2(\tau)},\; \forall \omega\in D_0(\calE)
	$,
	$u_\infty$ is a weak solution of (\ref{quasilinear elliptic equation}).
	Since $\Re(\alpha z)=0,\; \forall \alpha\in \C$ implies $z=0$, a solution of
	$$
	\Re\langle F(\partial u), \partial g \rangle_\calH = \Re\langle f, g \rangle_{L^2(\tau)}
	$$
	is a solution of (\ref{quasilinear elliptic equation}).	
\end{proof}

\begin{theorem}
Assume that $F:\calH \to \calH$ satisfies the property:
$$
\Re\langle F(h) - F(v), h-v \rangle_\calH\geq \theta\|h-v\|_\calH^2,
$$
for any $h,v\in \calH$, $\theta>0$.
Then the equation (\ref{quasilinear elliptic equation}) has at most one solution in $D_0(\calE)$.
\end{theorem}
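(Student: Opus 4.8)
The plan is to run the standard strong-monotonicity uniqueness argument, testing the difference of two putative solutions against itself. Suppose $u_1, u_2 \in D_0(\calE)$ are both weak solutions of (\ref{quasilinear elliptic equation}). By the definition of weak solution, and recalling that the equation descends to the quotient $D_0(\calE)=D(\calE)/\ker\calE$, for every test element $g\in D_0(\calE)$ we have
$$
\langle F(\partial u_1), \partial g \rangle_\calH = \langle f, g \rangle_{L^2(\tau)} = \langle F(\partial u_2), \partial g \rangle_\calH.
$$
Subtracting these two identities gives $\langle F(\partial u_1) - F(\partial u_2), \partial g \rangle_\calH = 0$ for all $g\in D_0(\calE)$.

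Next I would make the specific choice $g := u_1 - u_2$, which is itself an element of $D_0(\calE)$, so that $\partial g = \partial u_1 - \partial u_2$. Taking real parts yields
$$
\Re\langle F(\partial u_1) - F(\partial u_2), \partial u_1 - \partial u_2 \rangle_\calH = 0.
$$
Applying the strong monotonicity hypothesis with $h = \partial u_1$ and $v = \partial u_2$ then forces $0 \geq \theta\,\|\partial u_1 - \partial u_2\|_\calH^2$, and since $\theta>0$ this gives $\partial(u_1 - u_2) = 0$ in $\calH$.

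Finally I would convert the vanishing of the gradient into equality in the quotient. On $D_0(\calE)$ the inner product is $\calE(\cdot,\cdot)$, and by Theorem \ref{diff calculus from dirichlet} one has $\calE(a,b)=\langle \partial a, \partial b\rangle_\calH$; hence $\|u_1 - u_2\|_{D_0(\calE)}^2 = \calE[u_1 - u_2] = \|\partial(u_1-u_2)\|_\calH^2 = 0$, so $u_1 = u_2$ in $D_0(\calE)$, which is the claimed uniqueness.

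I do not expect a genuine obstacle here; the argument is short and self-contained. The only point requiring minor care is the bookkeeping around the quotient: one must check that $u_1 - u_2$ is an admissible test element in $D_0(\calE)$ and that the $\calH$-seminorm of $\partial(u_1-u_2)$ coincides exactly with the $D_0(\calE)$-norm, so that the conclusion is genuine equality in $D_0(\calE)$. The complex-versus-real-part subtlety that appeared in the existence theorem (resolved there via $\Re(\alpha z)=0$ for all $\alpha\in\C \implies z=0$) is not needed for uniqueness: testing against $u_1-u_2$ and passing to real parts already closes the argument directly.
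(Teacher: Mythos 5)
Your argument is correct and is essentially the same as the paper's: subtract the two weak formulations, test against $u_1-u_2$, and apply strong monotonicity to conclude $\|\partial(u_1-u_2)\|_\calH=0$. The only difference is that you spell out the final bookkeeping step (identifying the $\calH$-norm of the gradient with the $D_0(\calE)$-norm via $\calE(a,b)=\langle\partial a,\partial b\rangle_\calH$), which the paper leaves implicit.
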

\begin{proof}
	Assume that $u_1$ and $u_2$ are both weak solutions of (\ref{quasilinear elliptic equation}), i.e.
	$$
	\langle F(\partial u_1), \partial g \rangle_\calH=\langle F(\partial u_2), \partial g \rangle_\calH = \langle f, g \rangle_{L^2(\tau)}
	$$
	Thus
	$$
	\Re\langle F(\partial u_1)-F(\partial u_2), \partial g \rangle_\calH=0, \; \forall g\in D(\calE).
	$$	
	By the assumption of the theorem,
	$$
	0=\Re\langle F(\partial u_1)-F(\partial u_2), \partial u_1-\partial u_2 \rangle_\calH \geq \theta \|\partial u_1-\partial u_2\|_\calH^2\geq 0.
	$$
\end{proof}

\section{Linear evolution equations}
One can consider in our setting not only elliptic, but also evolution equations, e.g. continuity equation. To establish an existence result it is convenient to use the Bochner space formulation of equations and solutions.

Let $V$ be a Banach space, $p\in[1,\infty)$
$$
L^p([0,T];V):=\{[u]: u:[0,T]\to V, \int_{0}^{T} \|u\|^p dt < \infty\},
$$
where the integral is understood in the sense of Bochner integration, $[u]$ is an equivalence class of a.e. coinciding functions. The norm on $L^p([0,T];V)$ is defined as follows $\|u\|^p_{L^p([0,T];V)}:=\int_{0}^{T}\|u(t)\|_V^p dt$.

The pair $j: V\to H$, $j^*: H \to V^*$ is called Gelfand triple iff $V$ is a Banach space, $H$ is a Hilbert space, $V^*$ is a Banach dual of $B$, the map $j:V\to H$ is linear, continuous, injective, with dense image, and $j^*:H\to V$ is defined by
$$
j^*(h)(v) = \langle h, j(v) \rangle_H,
$$
It follows that $j^*$ is also linear, continuous, injective and with dense image.

\begin{example}
	\label{noncom triple}
	Assume that a Dirichlet space $(\calN, \tau, \calE, D(\calE))$ and a regular $C^*$-algebra $\calA$ are chosen s.t. $D(\calE)$ is a separable space.
	Then $V:=D(\calE)$ equipped with the inner product $\langle \cdot, \cdot \rangle_{\calE}:=\langle \cdot, \cdot \rangle_{L^2(\tau)}+\calE(\cdot, \cdot)$, $H:=L^2(\tau)$, $j(f):=f$, is an example of a Gelfand triple.
\end{example}

\begin{definition}[Weak derivative]
	Let $u\in L^2([0,T];V)$. An element $\omega\in L^2([0,T],V^*)$ is called a weak derivative iff
	$$
	\int_{0}^{T} \bar{\omega}(t) \phi(t) dt = - \int_{0}^{T} \bar{u}(t) \dot{\phi}(t) dt,
	$$
	for any $\phi\in C_0^\infty([0,T],\C)$. The integrals are understood in Bochner sense. 
\end{definition}
It is standard that if a weak derivative exists it is unique (see e.g. Proposition 11.10 in \cite{Brok16}). We denote it by $\dot{u}$.

\begin{theorem}[Theorem 11.13 of \cite{Brok16}]
	\label{existance of solution for evolution equation}
	Let $j: V\to H$, $j^*: H \to V^*$ be a Gelfand triple, where $V$ is a separable infinite-dimensional Banach space. If
	$F: V\times V \times [0,T] \to \R$ is a bilinear form for every $t\in [0,T]$ satisfying
	\begin{enumerate}
		\item $F(v,v;t)\geq c_0 \|v\|^2_V - c_1\|v\|_H^2$, 
		\item $|F(v,w;t)|\leq c_2 \|v\|_V\|w\|_V$.
	\end{enumerate}
	for any $v,w\in V$, $t\in [0,\infty]$, some $c_0, c_1, c_2>0$, which is Lebesgue measurable in $t$.
	Then there is a function $u\in L^2([0,T],V)$ such that $\dot{u}\in L^2([0,T],V^*)$ and it satisfies the equation
	\begin{align}
	\label{Evolution equation}
	\dot{u}(t)(v) + F(u(t),v;t)&=b(t)(v),\; \forall v\in V,\\
	u(0)&=u_0
	\end{align}
	for every $t\in [0,T]$. Here $u_0\in V$, $b\in L^2([0,T];V^*)$.
\end{theorem}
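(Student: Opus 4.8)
The plan is to run the Faedo--Galerkin scheme: reduce the problem to a sequence of finite-dimensional linear ODE systems, obtain uniform energy estimates, and pass to a weak limit. Since $V$ is separable, I would fix a sequence $\{w_k\}_{k\geq 1}\subset V$ that is linearly independent and total, and set $V_m:=\mathrm{span}\{w_1,\dots,w_m\}$. For each $m$ I look for a curve $u_m(t)=\sum_{k=1}^m g_m^k(t)\,w_k$ in $V_m$ satisfying, for $l=1,\dots,m$,
\[
\langle \dot u_m(t), w_l\rangle_H + F(u_m(t),w_l;t)=b(t)(w_l),
\]
together with an approximation $u_m(0)\in V_m$ of $u_0$ with $u_m(0)\to u_0$ in $H$. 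Identifying $V_m$ with its image in $H$ under $j$, this is the linear system $G\dot g_m(t)+B(t)g_m(t)=\beta(t)$, where $G_{lk}=\langle w_k,w_l\rangle_H$ is the (invertible) Gram matrix, $B_{lk}(t)=F(w_k,w_l;t)$ is measurable and, by assumption (2) together with the equivalence of norms on the finite-dimensional $V_m$, essentially bounded, and $\beta_l(t)=b(t)(w_l)\in L^2([0,T])$. Carath\'eodory's existence theorem for linear systems with measurable coefficients then yields a unique absolutely continuous solution $g_m$ on all of $[0,T]$.

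Next I would derive the a priori estimate. Multiplying the $l$-th equation by $g_m^l(t)$, summing, and using $\langle \dot u_m,u_m\rangle_H=\tfrac12\frac{d}{dt}\|u_m\|_H^2$, the coercivity assumption (1) and Young's inequality give
\[
\frac{d}{dt}\|u_m(t)\|_H^2 + c_0\|u_m(t)\|_V^2 \leq 2c_1\|u_m(t)\|_H^2 + \frac{1}{c_0}\|b(t)\|_{V^*}^2 .
\]
Gr\"onwall's inequality then bounds $\sup_{t}\|u_m(t)\|_H^2$ and, after reinsertion, $\int_0^T\|u_m(t)\|_V^2\,dt$, uniformly in $m$, in terms of $\|u_0\|_H$ and $\|b\|_{L^2([0,T];V^*)}$. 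In particular $(u_m)$ is bounded in $L^2([0,T];V)$; since $V$ is reflexive (in the intended application of Example \ref{noncom triple} it is the separable Hilbert space $D(\calE)$), so is $L^2([0,T];V)$, and I may extract a subsequence with $u_m\rightharpoonup u$ weakly there.

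To identify the limit, I fix $k$ and $\phi\in C^1([0,T])$ with $\phi(T)=0$, multiply the Galerkin identity by $\phi$, integrate over $[0,T]$, and integrate by parts in the first term, obtaining
\[
-\int_0^T\langle u_m(t),w_k\rangle_H\,\dot\phi(t)\,dt + \int_0^T F(u_m(t),w_k;t)\,\phi(t)\,dt = \int_0^T b(t)(w_k)\,\phi(t)\,dt + \langle u_m(0),w_k\rangle_H\,\phi(0).
\]
Every term is linear in $u_m$, so weak convergence (and $u_m(0)\to u_0$ in $H$) lets me pass to the limit, producing the same identity with $u$ and $u_0$ in place of $u_m$ and $u_m(0)$. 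As $\{w_k\}$ is total the identity extends to all $v\in V$, which says precisely that $t\mapsto\langle u(t),v\rangle_H$ has distributional derivative $b(t)(v)-F(u(t),v;t)$. Setting $\dot u(t):=b(t)-A(t)u(t)\in V^*$, where $A(t)u(t):=F(u(t),\cdot\,;t)$, assumption (2) gives $\|A(t)u(t)\|_{V^*}\leq c_2\|u(t)\|_V$, so $\dot u\in L^2([0,T];V^*)$ is the weak derivative of $u$ and $u$ solves the evolution equation.

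It remains to match the initial datum, and this is the step I expect to be the main obstacle. The equation alone only determines $u$ through its class in $L^2([0,T];V)$, so the pointwise value $u(0)$ must first be given meaning. For this I would invoke the Lions--Magenes embedding: any $u\in L^2([0,T];V)$ with $\dot u\in L^2([0,T];V^*)$ admits a representative in $C([0,T];H)$ and satisfies the associated integration-by-parts formula. Applying that formula with a test function $\phi$ such that $\phi(0)\neq 0$ and $\phi(T)=0$ and comparing with the limit identity above forces $\langle u(0),v\rangle_H=\langle u_0,v\rangle_H$ for all $v\in V$, hence $u(0)=u_0$. The delicate points throughout are the rigorous justification of Gr\"onwall under mere measurability in $t$, the reflexivity needed for weak compactness, and especially this continuity-in-time (trace) lemma, which is what makes the initial condition well posed.
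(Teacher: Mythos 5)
This theorem is imported verbatim from the literature (Theorem 11.13 of Brokate's lecture notes), so the paper contains no proof of its own to compare against. Your Faedo--Galerkin argument --- finite-dimensional projection, Carath\'eodory solvability of the resulting linear ODE system, the energy estimate with Gr\"onwall, weak-$L^2([0,T];V)$ compactness, passage to the limit using linearity, and the Lions--Magenes trace lemma for the initial condition --- is precisely the standard proof of this classical result and is the one given in the cited source; I see no gap in it. The only point worth recording is the one you already flag: both the extraction of a weakly convergent subsequence from the bounded sequence $(u_m)$ in $L^2([0,T];V)$ and the embedding of $\{u\in L^2([0,T];V):\dot u\in L^2([0,T];V^*)\}$ into $C([0,T];H)$ require $V$ to be reflexive, a hypothesis the statement as reproduced here omits ("separable infinite-dimensional Banach space") but which is harmless in the paper's application, where $V=D(\calE)$ is a separable Hilbert space.
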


\begin{definition}[Continuity equation]
	\label{Continuity equation}
	Let $V:=D(\calE)$, $H:=L^2(\tau)$,	$j(f):=f$ as in the Example (\ref{noncom triple}). A continuity equation is an equation of the form (\ref{Evolution equation}) with
	$$
	F(u,v,t):=\Re\langle h(t) u, \partial v\rangle_\calH,
	$$
	where $h: [0,T]\to \calH$ is a measurable flow of vector fields, $\calH$, $\partial$ are the elements of the symmetric differential calculus associated with $\calE$. A flow $b\in L^2([0,T]; D(\calE)^*)$ can be interpreted as a source/sink that depends on time.
\end{definition}

\begin{corollary}
	If $h: [0,T]\to \calH$ is such that for any $u\in \calB:=\calD(\calE)\cap\calA$
	\begin{enumerate}
		\item $\Re\langle h(t) u, \partial u\rangle_\calH \geq c_0 \calE[u] - c_1\|u\|_{L^2(\tau)}^2$, 
		\item $|\Re\langle h(t) u, \partial \omega\rangle_\calH|\leq c_2 \|u_1\|_\calE \|u_2\|_\calE$,
	\end{enumerate}
	where $c_0, c_2>0$, $c_1\in \R$, $\|\cdot\|_\calE:=(\calE[\cdot]+\|\cdot\|_{L^2(\tau)}^2)^{\frac{1}{2}}$, then there is a solution of (\ref{Continuity equation}) in the sense of Theorem (\ref{existance of solution for evolution equation}).
\end{corollary}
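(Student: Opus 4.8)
The plan is to check that the $\R$-bilinear form defining the continuity equation satisfies the three hypotheses of Theorem \ref{existance of solution for evolution equation} and then invoke that theorem directly. I would work inside the Gelfand triple of Example \ref{noncom triple}, so that $V = D(\calE)$ (separable by the standing assumption, and infinite-dimensional as the theorem requires), $H = L^2(\tau)$, and $V^* = D(\calE)^*$. The only structural facts I need at the outset are $\|v\|_V^2 = \|v\|_\calE^2 = \calE[v] + \|v\|_{L^2(\tau)}^2$ and $\|v\|_H = \|v\|_{L^2(\tau)}$, which together give the identity $\calE[v] = \|v\|_V^2 - \|v\|_H^2$ that drives the coercivity computation below.

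First I would pin down where $F(u,v;t) := \Re\langle h(t)u, \partial v\rangle_\calH$ is manifestly defined. For $v \in D(\calE)$ the closed derivation supplies $\partial v \in \calH$, whereas the product $h(t)u$ is the right $\calA$-module action on $\calH$, which requires $u \in \calA$; hence $F$ is a priori defined only on $\calB \times D(\calE)$, where $\calB = D(\calE)\cap\calA$. The essential preparatory step is to extend $F$ to all of $V\times V$. Since $\calB$ is $\langle\cdot,\cdot\rangle_\calE$-dense in $D(\calE)$ by regularity, and since for fixed $v$ and $t$ the map $u \mapsto F(u,v;t)$ is $\R$-linear and, by hypothesis (2), bounded by $c_2\|u\|_\calE\|v\|_\calE$, it extends uniquely to a bounded $\R$-bilinear form on $V\times V$, and the two estimates (1) and (2) persist for the extension by continuity.

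Next I would translate the hypotheses into the exact shape demanded by Theorem \ref{existance of solution for evolution equation}. Boundedness is immediate, as hypothesis (2) reads $|F(v,w;t)| \leq c_2\|v\|_V\|w\|_V$. For the G\r{a}rding-type coercivity, hypothesis (1) gives $F(v,v;t) \geq c_0\calE[v] - c_1\|v\|_{L^2(\tau)}^2$, and substituting $\calE[v] = \|v\|_V^2 - \|v\|_H^2$ turns this into
$$
F(v,v;t) \;\geq\; c_0\|v\|_V^2 - (c_0+c_1)\|v\|_H^2,
$$
which is precisely condition (1) of the theorem with coercivity constant $c_0 > 0$ and lower-order constant $c_0+c_1$ (if this happens to be nonpositive the inequality is only stronger, so one may replace it by any positive number to meet the sign convention of the theorem). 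Measurability of $t \mapsto F(v,w;t)$ for fixed $v,w$ follows from the assumed measurability of the flow $h$, since $F$ arises from $h(t)$ by composition with the continuous operations of module multiplication and the $\calH$-inner product.

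With coercivity, boundedness, and measurability established, Theorem \ref{existance of solution for evolution equation} applies verbatim and yields $u \in L^2([0,T];V)$ with $\dot u \in L^2([0,T];V^*)$ solving \eqref{Evolution equation} for the prescribed $b$ and $u_0$; by Definition \ref{Continuity equation} this is exactly a weak solution of the continuity equation. I expect the one genuine obstacle to be the density extension of $F$ from $\calB\times D(\calE)$ to $V\times V$: one must verify that the right module action $h(t)u$, defined only for $u\in\calA$, produces through estimate (2) a form continuous in the $\calE$-norm rather than in the uncontrolled $C^*$-norm $\|u\|_\infty$ that appears in the naive bound $\|h(t)u\|_\calH \leq \|u\|_\infty\|h(t)\|_\calH$. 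Once that continuity is secured, the remainder is bookkeeping of the constants $c_0, c_1, c_2$.
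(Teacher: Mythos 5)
Your proposal is correct and follows exactly the route the paper intends: the paper states this corollary with no proof at all, treating it as an immediate verification of the two hypotheses of Theorem \ref{existance of solution for evolution equation} for the Gelfand triple of Example \ref{noncom triple}, which is precisely what you carry out (reading the $\|u_1\|_\calE\|u_2\|_\calE$ in hypothesis (2) as the evident typo for $\|u\|_\calE\|\omega\|_\calE$). Your additional care with the density extension of $F$ from $\calB\times D(\calE)$ to $V\times V$ via the regularity of $\calA$, and with absorbing a possibly nonpositive $c_0+c_1$ into the sign convention of the theorem, supplies details the paper leaves implicit; nothing in the argument is missing.
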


\end{document}